\documentclass[reqno,11pt]{amsart}
\usepackage{amssymb}
\usepackage{enumitem}
\setlist[itemize]{leftmargin=2.5em}
\usepackage[T1]{fontenc}
\usepackage{setspace}
\usepackage{stix2,microtype}

\DeclareSymbolFont{mdcharterletters}{OML}{mdbch}{m}{it}
\SetSymbolFont{mdcharterletters}{bold}{OML}{mdbch}{b}{it}
\DeclareMathSymbol{\ell}{\mathord}{mdcharterletters}{"60}
\setbox0=\hbox{$x$}
\usepackage{mathtools,amsthm,thmtools,thm-restate}
\usepackage[noadjust,nocompress]{cite}

\usepackage{hyperref}
\hypersetup{
	colorlinks=true,
	allcolors=blue,
	pdftitle={On the solution to the Erd\H os--Hajnal problem on high-girth high-chromatic subgraphs},
	pdfauthor={Tung Nguyen and Bartosz Walczak}
}
\usepackage[capitalise]{cleveref}

\makeatletter
\def\pdf@Cref#1{%
	\ifcsname r@#1@cref\endcsname
	\expandafter\expandafter\expandafter\pdf@Cref@aux
	\csname r@#1@cref\endcsname\pdf@Cref@end
	\else
	??%
	\fi
}
\def\pdf@Cref@aux#1#2#3\pdf@Cref@end{%
	\pdf@Cref@parse#1\@nil
}
\def\pdf@Cref@parse[#1][#2][#3]#4\@nil{%
	\csname Cref@#1@name\endcsname\space#4%
}
\def\pdf@cref#1{%
	\ifcsname r@#1@cref\endcsname
	\expandafter\expandafter\expandafter\pdf@cref@aux
	\csname r@#1@cref\endcsname\pdf@cref@end
	\else
	??%
	\fi
}
\def\pdf@cref@aux#1#2#3\pdf@cref@end{%
	\pdf@cref@parse#1\@nil
}
\def\pdf@cref@parse[#1][#2][#3]#4\@nil{%
	\csname cref@#1@name\endcsname\space#4%
}
\makeatother

\usepackage{geometry}
\newtheorem{theorem}{Theorem}[section]
\newtheorem{lemma}[theorem]{Lemma}
\newtheorem{problem}[theorem]{Problem}
\Crefname{problem}{Problem}{Problems}
\newtheorem*{claim}{Claim}
\Crefname{subsection}{Subsection}{Subsections}
\Crefname{appendixsection}{Appendix}{Appendices}
\allowdisplaybreaks

\newcommand{\blackqed}{\leavevmode\hbox to .77777em{\hfil\vrule width .6em height .6em depth 0em\hfil}}

\expandafter\let\expandafter\oldproof\csname\string\proof\endcsname
\let\oldendproof\endproof
\renewenvironment{proof}[1][\proofname]{\oldproof[\normalfont\bfseries #1]}{\oldendproof}

\newenvironment{subproof}[1][\proofname]{\oldproof[#1]}{\oldendproof}

\newcommand{\mac}[1]{\text{\usefont{OMS}{cmsy}{m}{n}#1}\mskip 1mu}
\newcommand{\mab}{\mathbb}
\newcommand{\eps}{\varepsilon}
\newcommand{\chis}{\chi^*}
\renewcommand{\subset}{\subseteq}

\renewcommand{\vec}{\overrightarrow}
\newcommand{\edge}{\operatorname{\mathsf{e}}}
\DeclarePairedDelimiter\abs{\lvert}{\rvert}
\DeclarePairedDelimiter\ceil{\lceil}{\rceil}
\DeclarePairedDelimiter\floor{\lfloor}{\rfloor}

\DeclareSymbolFont{timesletters}{OML}{ztmcm}{m}{it}

\makeatletter
\newcommand*\greekrange[3]{%
	\@tempcnta=#2\relax
	\@for\g:=#3\do{%
		\expandafter\DeclareMathSymbol\csname\g\endcsname
		{\mathalpha}{#1}{\the\@tempcnta}%
		\advance\@tempcnta\@ne}}
\makeatother

\greekrange{timesletters}{11}{alpha,beta,gamma,delta,epsilon,zeta,eta,theta,iota,kappa,lambda,mu,nu,xi,pi,rho,sigma,tau,upsilon,phi,chi,psi,omega}
\greekrange{timesletters}{34}{varepsilon,vartheta,varpi,varrho,varsigma,varphi}

\newcommand{\compactsubs}[1]{%
	\fontdimen16\textfont2=#1\fontdimen16\textfont2
	\fontdimen17\textfont2=#1\fontdimen17\textfont2
	\fontdimen16\scriptfont2=#1\fontdimen16\scriptfont2
	\fontdimen17\scriptfont2=#1\fontdimen17\scriptfont2
	\fontdimen16\scriptscriptfont2=#1\fontdimen16\scriptscriptfont2
	\fontdimen17\scriptscriptfont2=#1\fontdimen17\scriptscriptfont2
}
\compactsubs{.8}

\AtBeginDocument{\belowdisplayshortskip\belowdisplayskip}

\makeatletter
\let\old@setaddresses\@setaddresses
\def\@setaddresses{\bigskip{\parindent 0pt\let\scshape\relax\old@setaddresses}}
\makeatother

\begin{document}
	\title[High-girth high-chromatic subgraphs]{
		On the solution to the Erd\H os--Hajnal problem on~high-girth high-chromatic subgraphs
	}
	\author{Tung Nguyen}
	\address{Mathematical Institute and Christ Church, University of Oxford, Oxford, UK}
	\email{\href{mailto:nguyent@maths.ox.ac.uk}{nguyent@maths.ox.ac.uk}}
	\thanks{The first author is supported by a Titchmarsh Research Fellowship and a Christ Church Research Centre Grant.}
	\author{Bartosz Walczak}
	\address{Department of Theoretical Computer Science, Faculty of Mathematics and Computer Science,
		Jagiellonian University, Krak\'ow, Poland}
	\email{\href{mailto:bartosz.walczak@uj.edu.pl}{bartosz.walczak@uj.edu.pl}}
	\thanks{The second author is partially supported by the National Science Centre of Poland grant 2019/34/E/ST6/00443.}
	\begin{abstract}
		A well-known problem of Erd\H os and Hajnal from the 1960s asks whether every graph with huge chromatic number contains a subgraph with large girth and large chromatic number.
		Very recently, Kohlmeyer and Kruer provided a strong negative solution to this problem: a construction of triangle-free graphs with arbitrarily large chromatic number whose subgraphs with no four-cycle have chromatic number at most $6$.
		The purpose of this exposition is to explain the construction method, relate it to relevant literature, and optimise the bound `$6$' to `$3$'.
	\end{abstract}
	\maketitle
	
	\section{Introduction}
	
	Unless stated otherwise, all graphs in this note are finite and simple.
	The {\em girth} of a graph $G$ is the minimum length of a cycle in $G$ or infinity when $G$ is acyclic.
	A {\em colouring} of $G$ is an assignment that gives a colour to each vertex of $G$ so that no two adjacent vertices get the same colour.
	If there is such a colouring using $k$ colours, then we say that $G$ is {\em $k$-colourable}.
	The {\em chromatic number} of $G$, denoted by $\chi(G)$, is the least $k$ such that $G$ is $k$-colourable.
	A classical result of Erd\H os~\cite{Erd59} asserts that there are graphs with arbitrarily large girth and large chromatic number.
	This motivated the following well-known problem of Erd\H os and Hajnal from the 1960s, which was stated by Erd\H os in numerous problem papers \cite{Erd69,Erd71,Erd75,Erd76a,Erd76b,Erd78,Erd79a,Erd79b,Erd81a,Erd81b,Erd82,Erd84,Erd85,Erd90,Erd95,EH85} and is also known more recently as \href{https://www.erdosproblems.com/108}{Erd\H os~problem~108}.%
	\footnote{Erd\H os was often stating an infinitary version of this problem, asking whether for every $g\ge4$, every graph with infinite chromatic number contains a subgraph with girth at least $g$ and infinite chromatic number.
	This formulation is equivalent to the one in \cref{prob:eh} by a standard compactness argument.}
	
	\begin{problem}[Erd\H os and Hajnal]
		\label{prob:eh}
		Is it true that for every pair of integers $g\ge4$ and $k\ge1$, there exists an integer $\ell$ such that every graph with chromatic number greater than $\ell$ contains a subgraph with girth at least $g$ and chromatic number greater than $k$?
	\end{problem}
	
	It is convenient to introduce the following notation: for any integers $g\ge4$ and $k\ge1$, let $\ell(g,k)$ denote the minimum value of $\ell$ with the property asserted in \cref{prob:eh} (equivalently, the maximum chromatic number of a graph in which every subgraph with girth at least $g$ has chromatic number at most $k$), where we set $\ell(g,k)=\infty$ if no such integer $\ell$ exists.
	\Cref{prob:eh} can thus be phrased as follows.
	
	\addtocounter{problem}{-1}
	\begin{problem}[Erd\H os and Hajnal, rephrased]
		Is $\ell(g,k)$ finite for all integers $g\ge4$ and $k\ge1$?
	\end{problem}
	
	Over the years, there have been a number of partial results towards a positive answer to this problem.
	Erd\H os and Hajnal \cite[Theorem~7.7]{EH66} in the 1960s showed that $\ell(g,2)=2\floor{g/2}$ for all $g\ge4$.
	By an elegant argument, R\"odl~\cite{Rod77} in the 1970s proved that $\ell(4,k)$ is finite for all $k\ge1$.
	The problem was also affirmatively resolved for various graph classes: the shift graphs by Tardos and Walczak~\cite{Tar18} (see \cite[Chapter~4]{Apa22} for a proof); the Kneser graphs by Mohar and Wu~\cite{MW23}; the Burling graphs by Pettie, Tardos, and Walczak~\cite{PTW26}; and the Mycielski graphs, via the folklore observation (documented in~\cite{CGL06}) that they contain all triangle-free graphs as induced subgraphs.
	More recently, Steiner~\cite{Ste26} proved that \cref{prob:eh} has a positive answer for all admissible values of $g$ and $k$ if `girth' is relaxed to `odd girth', which is defined with `odd cycle' replacing `cycle' in the definition of girth; he also significantly improved on R\"odl's upper bound on $\ell(4,k)$.
	
	In an approach to resolve \cref{prob:eh} in the negative, Pettie, Tardos, and Walczak~\cite{PTW26}, by studying the class of Burling graphs, proved a lower bound on $\ell(5,k)$ that is a tower of twos of height linear in $k$, which hinted at the possibility that $\ell(5,k)$ might be infinite for all sufficiently large $k$.
	
	Very recently, \cref{prob:eh} has been resolved by Kohlmeyer and Kruer~\cite{KK26a,KK26b} with substantial assistance from AI tools; they proved that $\ell(5,k)$ is infinite for all $k\ge6$.
	The resolution constructs triangle-free graphs with arbitrarily large chromatic number whose $C_4$-free subgraphs (i.e., subgraphs with no four-cycle) are all $6$-colourable.
	
	\begin{theorem}[{\cite[Theorem~1.1]{KK26a,KK26b}}]
		\label{thm:girthchi}
		For every $k\ge1$, there exists a triangle-free graph with chromatic number greater than $k$ whose $C_4$-free subgraphs are all $6$-colourable.
	\end{theorem}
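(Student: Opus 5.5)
I would construct the graphs recursively, in the spirit of the Zykov, Tutte and Burling constructions, so that triangle-freeness and growth of the chromatic number are built in. The $6$-colourability of $C_4$-free subgraphs would then be proved by induction on the construction, using a stronger invariant than plain $6$-colourability.

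\textbf{The construction.} Start with $G_1=K_2$. Given $G_k$, form $G_{k+1}$ as follows.
\begin{itemize}
\item Take many disjoint copies of $G_k$, indexed by the tuples $T=(x_1,\dots,x_N)$ with one vertex $x_i$ from each of $N$ earlier copies, where $N$ is large.
\item For each such tuple, add a new vertex, or a small gadget, adjacent to the vertices of $T$.
\end{itemize}
Since $T$ is an independent set, no triangle is created. The usual pigeonhole argument gives $\chi(G_{k+1})>\chi(G_k)$: a $k$-colouring would give a monochromatic tuple, which the new vertex cannot be coloured against.

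The essential additional design choice is that the new attachments come in \emph{bundles}. Whenever two new vertices $v,v'$ are attached to tuples sharing two vertices $x_i,x_j$, the cycle $v x_i v' x_j$ is a $C_4$. A $C_4$-free subgraph $H$ can therefore keep only a very restricted pattern of these edges. I would make the bundles so rich, e.g.\ by attaching a copy of a new vertex to every subtuple differing in one coordinate, that in $H$ the following holds: for every new vertex $v$, at most one of its retained neighbours lies outside a single ``private'' coordinate.

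\textbf{The invariant.} The statement I would prove by induction is roughly the following.
\begin{quote}
Every $C_4$-free subgraph $H$ of $G_k$ has a proper colouring with colours from $\{1,\dots,6\}$ in which the ``attachment'' vertices of each level use only a prescribed small set of colours (say $3$ of them), determined by their position.
\end{quote}
With this, when passing to $G_{k+1}$, the old copies are coloured by induction. Each new vertex $v$ has, in $H$, neighbours of restricted type: one arbitrary neighbour plus neighbours in attachment classes using few colours. So $v$ sees at most $5$ distinct colours and can be given a free colour. The count $3+2+1=6$ is the kind of arithmetic that would produce the constant $6$.

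\textbf{Main obstacle.} The key step, and the one I expect to be hardest, is the structural lemma that $C_4$-freeness of $H$ restricts the retained edges at each new vertex to a bounded pattern. The difficulty is that $H$ may delete edges arbitrarily, so forced $C_4$'s in $G$ need not survive. The restriction must therefore come from $C_4$'s formed solely by edges $H$ chooses to keep: two retained neighbours of $v$ with another common retained neighbour. I would first try to make this work with an ordering argument. Order the new vertices, and show that if $v$ keeps two edges into the same earlier bundle, then any later vertex keeping edges to both endpoints closes a $C_4$. This would yield a greedy colouring along the order with bounded ``back-degree in colours'' rather than bounded back-degree.

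If this fails, the fallback is to replace single new vertices by the Kohlmeyer--Kruer gadgets and prove the structural lemma for each gadget separately.
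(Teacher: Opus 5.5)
\textbf{Gap: your base construction already contains $C_4$-free subgraphs of unbounded chromatic number.} So no invariant and no bundling can rescue it, because a subgraph $H$ may simply discard every extra vertex and edge the bundles add.

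Recall Tutte's graphs $F_k$ (published as Blanche Descartes). $F_{k+1}$ consists of an independent set $X$ with $|X|=k(n_k-1)+1$, where $n_k=|V(F_k)|$, and, for each $n_k$-subset $Y\subseteq X$, a copy $F_k^Y$ of $F_k$ joined to $Y$ by a perfect matching. It has girth at least $6$ and chromatic number $k+1$. If $N$ is at least the number of such sets $Y$, then $F_{k+1}$ is a subgraph of your $G_{k+1}$:
\begin{itemize}
\item by induction, place each $F_k^Y$ in its own copy of $G_k$;
\item map $x\in X$ to the new vertex whose tuple, in each copy hosting an $F_k^Y$ with $Y\ni x$, is the matching partner of $x$, and is arbitrary in the other copies.
\end{itemize}
In particular, your structural claim that each new vertex keeps at most one neighbour outside a private coordinate is false. $C_4$-freeness never restricts the neighbourhood of a single vertex, only how many vertices can share two common neighbours, and here every $x\in X$ keeps neighbours in many coordinates. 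If you instead meant Tutte's construction itself, as your phrase about a monochromatic tuple suggests, then the base graph has girth $6$ outright. Separately, a single new vertex attached to a monochromatic tuple is trivially colourable, so your pigeonhole step needs a rainbow tuple (Zykov), not a monochromatic one.

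\textbf{The missing idea.} You need a setting where that pairwise restriction yields a \emph{bounded-degree} object, plus a base graph whose bounded-degree subgraphs are sparse. The paper proves the stronger Theorem~\ref{thm:main} this way, working in the line digraph $\vec L(\vec G)$ of an acyclic orientation of a $d$-degenerate graph $G$ with out-degrees at most $d$.
\begin{itemize}
\item All out-neighbours of $(u,v)$ lie in the out-star of $v$, which has at most $d$ edges.
\item $C_4$-freeness lets each pair $(v,w_1),(v,w_2)$ be a common out-neighbour of at most one vertex of $H$.
\item Hence the vertices of out-degree at least $2$ in $\vec H$ form a subgraph $Q\subseteq G$ of maximum degree at most $d+\binom d2$ (Lemma~\ref{lem:q}).
\item Vertices of out-degree at most $1$ are coloured greedily in reverse topological order (Lemma~\ref{lem:extend}).
\end{itemize}
One then needs $G$ of huge chromatic number whose subgraphs of maximum degree at most $d+\binom d2$ are sparse, or even $2$-degenerate. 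This is supplied by the random multipartite graph with double-exponentially shrinking parts (Janzer--Steiner--Sudakov), and $\chi(L(\vec G))\ge\log_2\chi(G)$ transfers the chromatic number. Bounded out-degree is exactly what your Zykov-type attachments lack: new vertices with many neighbours spread over independent copies give precisely the freedom Tutte's construction exploits. Finally, Kohlmeyer and Kruer use no gadgets; their construction is this arc-graph construction, so your fallback does not point to an existing tool.
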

	
	The construction behind \cref{thm:girthchi} has been thus far described in two manuscripts: \cite{KK26a}, produced with substantial assistance from AI tools (as acknowledged by the authors) and published on Kohlmeyer's webpage; and \cite{KK26b}, AI-generated from the Lean code and published by the platform Conjectures.io.
	It is the latter that achieved some reach among the mathematical community, because of public announcements from the platform's maintainers.
	Especially the presentation in~\cite{KK26b} may not be readily accessible to researchers in graph theory; furthermore, both manuscripts lack any references to the literature except for \cite{KK26a} citing the work of Janzer, Steiner, and Sudakov~\cite{JSS26} on which the construction builds.
	The purpose of this note is thus twofold:
	\begin{itemize}
		\item firstly, to present the ideas behind \cref{thm:girthchi} in a manner more accessible and less numerically forbidding; and
		\item secondly, to connect every single key step of the construction to the relevant literature that we are aware of.
	\end{itemize}
	
	Pursuing these two objectives also led us to the following improvement of \cref{thm:girthchi} that uses essentially the same proof method.
	
	\begin{theorem}
		\label{thm:main}
		For every $k\ge1$, there exists a triangle-free graph with chromatic number greater than $k$ whose $C_4$-free subgraphs are all $3$-colourable.
	\end{theorem}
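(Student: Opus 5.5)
We follow the Kohlmeyer--Kruer strategy and build the graph as a \emph{shift-graph-like} or \emph{ordered} construction in the spirit of Janzer, Steiner, and Sudakov~\cite{JSS26}. Every edge of the constructed graph $G$ will belong to a four-cycle of a very specific rigid type. Deleting even one edge from each such rigid four-cycle then forces a $C_4$-free subgraph to retain little structure.

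Concretely, start from a triangle-free graph $H$ of huge chromatic number with an auxiliary vertex ordering, for instance a shift graph or a Burling-type graph. Blow up each vertex $v$ into an independent set $B_v$ of $2$ copies (or a small gadget). Join $B_u$ to $B_v$ completely whenever $uv\in E(H)$, possibly after iterating this blow-up with a Zykov/Mycielski-style layering so that the chromatic number still tends to infinity.

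Triangle-freeness is inherited because each $B_v$ is independent and $H$ is triangle-free. The chromatic number is at least $\chi(H)$, since $H$ is an induced subgraph. Each edge $uv$ of $H$ gives a $K_{2,2}\cong C_4$ on $B_u\cup B_v$, so a $C_4$-free subgraph $F$ must miss at least one edge from each complete bipartite pair $B_u,B_v$.

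The key step is to show that $F$ is $3$-colourable. The plan is to orient or order $H$ so that each vertex has a canonical ``left'' neighbourhood, as in shift graphs, where edges are pairs $(a,b),(b,c)$. One would then prove that in $F$ every vertex $x\in B_v$ has at most one ``active'' direction. For each edge $uv$ of $H$, the $C_4$-free condition limits $F[B_u\cup B_v]$ to a forest-like graph, namely a path of length at most $3$ inside $K_{2,2}$.

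From this one extracts a degenerate-ordering argument: process vertices in the order of $H$ and show that each vertex of $F$ has at most $2$ earlier neighbours. That gives greedy $3$-colouring. An alternative is to decompose $F$ into a forest plus a matching-like part and colour with $2+1$ colours.

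The main obstacle is this step, because the $C_4$-free requirement is local to pairs $B_u,B_v$, whereas $H$ has vertices of huge back-degree. The first thing to try is to replace the blow-up by a construction in which every path $u,v,w$ of $H$, and not only every edge, lies on a rigid four-cycle. Then a $C_4$-free subgraph can keep, at each vertex, edges toward only one ``block'' of earlier neighbours, which bounds the back-degree by $2$ and yields the bound $3$.

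If the degeneracy bound fails, one falls back on the Kohlmeyer--Kruer argument. That argument partitions $F$ into a bounded number of classes each inducing a forest-like structure. The task is then to optimise which rigid $C_4$'s are forced so that the partition needs only $3$ colours.
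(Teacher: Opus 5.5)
\textbf{Genuine gap: the blow-up construction cannot work.} The reason is the same observation you use for the lower bound. Choosing one vertex from each $B_v$ gives an induced copy of $H$, so every $C_4$-free subgraph of $H$ is also a $C_4$-free subgraph of your graph. Blowing up therefore never helps: $H$ itself would already have to be a counterexample, which is the whole theorem.

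The candidates you name are known not to be counterexamples. Shift graphs and Burling graphs are positive instances of the Erd\H os--Hajnal problem \cite{Tar18,PTW26}, so once their chromatic number is large they contain subgraphs of girth at least $5$ and chromatic number greater than $3$. Iterated Mycielski graphs contain every triangle-free graph as an induced subgraph, so they fail too.

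In particular, your degeneracy step (``every vertex of $F$ has at most $2$ earlier neighbours'') already fails when $F$ is a high-girth, high-chromatic subgraph of one copy of $H$. The constraint that $F$ misses an edge of each $K_{2,2}$ on $B_u\cup B_v$ is purely local and says nothing about such $F$. Your closing paragraphs (forcing every path onto a rigid four-cycle, or falling back on Kohlmeyer--Kruer) state hopes rather than arguments.

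\textbf{The missing idea is a mechanism by which $C_4$-freeness has a global effect.} The paper gets it from a sparse base graph rather than from $H$. Its graph is the line digraph $L(\vec G)$ of an acyclic orientation $\vec G$ of a base graph $G$ with maximum out-degree at most $d$. This is an induced subgraph of a shift graph, so your ``shift-graph-like'' instinct is right, but only for a carefully chosen sparse $\vec G$. It is triangle-free and satisfies $\chi(G)\le2^{\chi(L(\vec G))}$.

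The argument then runs in three steps.

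First, two in-arcs and two out-arcs at a vertex $v$ of $G$ span a $K_{2,2}$ in $L(\vec G)$. So in a $C_4$-free $H\subseteq L(\vec G)$, every pair of the at most $d$ out-arcs at $v$ has at most one common in-neighbour in $\vec H$. Hence the arcs with at least two out-neighbours in $\vec H$ form a subgraph $Q\subseteq G$ of maximum degree at most $\Delta=d+\binom d2$ (\cref{lem:q}).

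Second, take a $3$-colouring $f$ of $Q$ with colours read as $2$-subsets of $[3]$. Colour $H$ in reverse topological order, giving $(u,v)$ a colour from $f(u)$. For arcs in $Q$, pick a colour in $f(u)\setminus f(v)\neq\emptyset$. Arcs outside $Q$ have at most one out-neighbour to avoid, so a colour in $f(u)$ remains (\cref{lem:extend}).

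Third, the real work is \cref{thm:orient}: a $d$-degenerate $G$ with $\chi^*(G)>2^k$ all of whose subgraphs of maximum degree at most $\Delta$ are $2$-degenerate. It is obtained from a random multipartite graph on a triangle-free template $\Gamma$ with doubly-exponentially decreasing part sizes, analysed with \cref{lem:tri}.

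None of these three ingredients appears in your proposal: the line-digraph reduction to bounded-degree subgraphs of the base graph, the $2$-subset colouring trick, or a base graph with this degeneracy property.
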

	
	In particular, this implies that $\ell(5,k)$ is infinite for all $k\ge3$.
	This, in turn, gives a full characterisation of the finiteness of $\ell(g,k)$ for all $g\ge4$ and $k\ge1$, as follows.
	
	\begin{theorem}
		\label{thm:exist}
		$\ell(g,k)$ is finite if and only if $g=4$ or $k\in\{1,2\}$.
	\end{theorem}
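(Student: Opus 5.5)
The plan is to split \cref{thm:exist} into the two directions and assemble it from \cref{thm:main} together with the known positive results quoted above.

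\emph{The ``if'' direction.} Suppose $g=4$ or $k\in\{1,2\}$.
\begin{itemize}
\item For $g=4$, finiteness of $\ell(4,k)$ for every $k$ is exactly R\"odl's theorem~\cite{Rod77}.
\item For $k=2$, the result of Erd\H os and Hajnal~\cite[Theorem~7.7]{EH66} gives $\ell(g,2)=2\floor{g/2}<\infty$ for all $g\ge4$.
\item For $k=1$, we use monotonicity in $k$: a subgraph with chromatic number greater than $2$ also has chromatic number greater than $1$. Hence $\ell(g,1)\le\ell(g,2)$, which is finite.
\end{itemize}
One could also argue $k=1$ directly: any graph with an edge contains a single edge, which is acyclic and has chromatic number $2$. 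Either way this direction needs no new work.

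\emph{The ``only if'' direction.} Suppose $g\ge5$ and $k\ge3$; we must show $\ell(g,k)=\infty$. First observe two monotonicity properties.
\begin{itemize}
\item A subgraph of girth at least $g$ has girth at least $5$, since $g\ge5$.
\item A subgraph with chromatic number greater than $k$ has chromatic number greater than $3$, since $k\ge3$.
\end{itemize}
Therefore $\ell(g,k)\ge\ell(5,3)$, and it suffices to prove $\ell(5,3)=\infty$.

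Fix any integer $\ell$ and apply \cref{thm:main} with $k=\ell$. This gives a triangle-free graph $G$ with $\chi(G)>\ell$ whose $C_4$-free subgraphs are all $3$-colourable. Any subgraph $H$ of $G$ with girth at least $5$ has no triangle and no four-cycle, so in particular it is $C_4$-free. Thus $\chi(H)\le3$. So $G$ witnesses that $\ell$ does not have the property in \cref{prob:eh} for $(g,k)=(5,3)$. Since $\ell$ was arbitrary, $\ell(5,3)=\infty$.

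\emph{Where the difficulty lies.} Given \cref{thm:main}, the argument is routine bookkeeping. The only care needed is getting the monotonicity of $\ell(g,k)$ in each argument the right way round. All of the genuine difficulty sits in \cref{thm:main} itself, which is proved later in the paper.
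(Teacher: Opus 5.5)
Your proposal is correct and follows the same route as the paper: there the statement is deduced directly from \cref{thm:main}, which gives $\ell(5,k)=\infty$ for $k\ge3$, together with the Erd\H os--Hajnal result ($\ell(g,2)=2\floor{g/2}$) and R\"odl's result ($\ell(4,k)<\infty$). Your version additionally writes out the monotonicity of $\ell(g,k)$ in both arguments, which the paper leaves implicit.
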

	
	After we released the first version of this note, Steiner~\cite{Ste-personal} shared with us an interesting corollary to the construction behind \cref{thm:girthchi,thm:main}.
	A graph is {\em $H$-free} if it contains no subgraph isomorphic to $H$.
	Let $K_{s,t}$ denote the complete bipartite graph with one part of size $s$ and the other part of size $t$ (then $C_4=K_{2,2}$).
	The following straightforward extension of \cref{thm:main} is needed.
	
	\begin{theorem}
		\label{thm:biclique-free}
		For all integers $s\ge t\ge2$ and $k\ge1$, there exists a triangle-free graph with chromatic number greater than $k$ whose $K_{s,t}$-free subgraphs are all $(t+1)$-colourable.
	\end{theorem}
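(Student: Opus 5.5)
The plan is to rerun the proof of \cref{thm:main} with the parameter $2$ (the smaller side of $C_4=K_{2,2}$) replaced by $t$ throughout, and to check that nothing beyond triangle-freeness and the count of common neighbours was used. Concretely, I expect \cref{thm:main} to be derived from a degeneracy-type statement. It should take roughly this form: the constructed graph $G$ carries a vertex ordering $\prec$ (coming from the iterative, Janzer--Steiner--Sudakov-style construction) such that every $C_4$-free subgraph $H$ can be coloured greedily with $3$ colours along $\prec$, because every vertex has at most $2$ earlier $H$-neighbours that can constrain its colour. The target for \cref{thm:biclique-free} is the analogous statement: in a $K_{s,t}$-free subgraph $H$, each vertex has at most $t$ such constraining earlier neighbours, so greedy colouring along $\prec$ uses at most $t+1$ colours.

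In order, the steps are as follows.
\begin{enumerate}
\item Build the same kind of graph $G_k$ with $\chi(G_k)>k$ and $G_k$ triangle-free. Adjust the blow-up or multiplicity parameters so that wherever the original construction guarantees that two earlier neighbours of a vertex have a second common neighbour, it now guarantees that any $t$ such neighbours have at least $s$ common neighbours. These common neighbours should be realised by edges forced to lie in $H$, for instance by the vertex $v$ itself together with its ``clones'' in the blow-up.
\item Verify that triangle-freeness and the lower bound on the chromatic number are unaffected. These arguments do not involve $s$ or $t$, and enlarging the multiplicities should only increase the chromatic number, for example via a Hales--Jewett/product type argument or via the iterated construction as in the proof of \cref{thm:main}.
\item Prove the key lemma. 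Suppose a vertex $v$ had $t+1$ (or $t$ of a single ``type'') earlier neighbours in $H$ that block distinct colours. Then, using the $s$ clones of $v$ sharing exactly that neighbourhood pattern, $H$ would contain $K_{s,t}$.
\item Conclude by greedy colouring along $\prec$.
\end{enumerate}

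The main obstacle is step 3 and the corresponding design choice in step 1. For $C_4$ one only needs a single extra common neighbour, and the argument can exploit that any two vertices of $H$ share at most one neighbour. For $K_{s,t}$ one must ensure that the $s$ common neighbours of the $t$ blocking vertices are themselves present in $H$ with all the relevant edges. This does not hold for an arbitrary subgraph $H$, so the colouring must be made robust: a vertex of $H$ should only be ``charged'' to earlier neighbours in a way that is inherited by its clones.

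My first attempt would be to colour all clones of a vertex of the base construction simultaneously. I would then argue by pigeonhole over the $s$-fold multiplicity that, if $t+1$ colours are blocked for each of at least $s$ clones, some $t$ earlier vertices are all adjacent in $H$ to the same $s$ clones. That configuration is a $K_{s,t}$ in $H$, which is a contradiction. Handling the case $s>t$ is only a matter of taking the multiplicity at least $s$ (times a bounded factor for the pigeonhole), which is why the statement is $(t+1)$-colourability, depending on $t$ but not on $s$.
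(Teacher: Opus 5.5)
\textbf{There is a genuine gap.} You never specify the host graph, and the mechanism you propose for using $K_{s,t}$-freeness cannot work.

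Your step~3 needs the $s$ clones of $v$, together with their edges to the $t$ blocking vertices, to lie in $H$. But $H$ is an arbitrary $K_{s,t}$-free subgraph and may use only one clone of each base vertex. Then there is no $K_{s,t}$ to find and the pigeonhole has nothing to act on. In fact clones cannot help at all. If every $K_{s,t}$-free subgraph of a graph $G'$ is $(t+1)$-colourable, the same holds for every subgraph of $G'$, so a blow-up has the property only if the base graph already does. Moreover, replacing vertices by independent sets of clones does not change the chromatic number.

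Nor does $K_{s,t}$-freeness bound the number of earlier $H$-neighbours of a vertex along any fixed ordering, since a star is $K_{s,t}$-free. So a greedy colouring along $\prec$ needs an explicit rule controlling which colours those neighbours receive, and your plan supplies none.

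\textbf{What is missing} is the line-digraph construction together with a \emph{local} use of $K_{s,t}$-freeness. The paper takes $q=2^k$, $d=\floor{4608q^2\ln(4q)}$, $\Delta=d+(s-1)\binom dt$, and a graph $G$ from \cref{thm:orient}. It orients $G$ acyclically as $\vec G$ with out-degrees at most $d$ and uses the host $L(\vec G)$. This host is triangle-free by acyclicity, and $\chi(L(\vec G))>k$ by \cref{lem:shiftchi}, since $\chi(G)\ge\chis(G)>2^k$.

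Now let $H\subseteq L(\vec G)$ be $K_{s,t}$-free. For any $t$ out-edges $(v,w_1),\dots,(v,w_t)$ at a vertex $v$, at most $s-1$ vertices $(u,v)$ of $\vec H$ are in-neighbours of all of them in $\vec H$. Hence the edges of $\vec G$ with out-degree at least $t$ in $\vec H$ form a subgraph $Q$ of maximum degree at most $\Delta$ (\cref{lem:q}). By \cref{thm:orient}, $Q$ is $2$-degenerate, so it can be coloured by $t$-subsets of $[t+1]$, as $\binom{t+1}t\ge3$.

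Then \cref{lem:extend} colours $H$ in reverse topological order, giving $(u,v)$ a colour from $f(u)$. If $(u,v)\in E(\vec Q)$, the colour is taken from $f(u)\setminus f(v)$; otherwise it avoids the fewer than $t$ out-neighbours of $(u,v)$. This is not a bounded back-degree greedy colouring: a vertex of $Q$ may have up to $d$ already-coloured neighbours, and it is the subset structure that makes $t+1$ colours suffice. Your intuition that $s$ should only affect a size parameter is borne out, but $s$ enters through the degree bound $\Delta$, not through a multiplicity of clones.
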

	
	Here is a concept proposed by Steiner~\cite{Ste-personal}: a graph $H$ is {\em $\chi$-avoidable} if for every $k\ge1$, every graph with sufficiently large chromatic number contains an $H$-free subgraph with chromatic number greater than $k$.%
	\footnote{The rationale behind this term is that a $\chi$-avoidable graph can always be avoided in some high-chromatic subgraph.}
	\Cref{thm:girthchi} thus implies that $C_4$ is not $\chi$-avoidable, while the aforementioned results of R\"odl~\cite{Rod77} and Steiner~\cite{Ste26} imply that cycles of odd length are $\chi$-avoidable.
	Since every bipartite graph is a subgraph of $K_{s,t}$ for some $s,t\geq 2$ while every non-bipartite graph contains a cycle of odd length, these results along with \Cref{thm:biclique-free} give a full characterisation of $\chi$-avoidable graphs.
	
	\begin{theorem}[Steiner~\cite{Ste-personal}]
		\label{thm:chi-avoidable}
		A graph is $\chi$-avoidable if and only if it is non-bipartite.
	\end{theorem}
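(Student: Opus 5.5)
We prove the two directions separately, using the results cited just above the statement together with \cref{thm:biclique-free}.

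\emph{Non-bipartite graphs are $\chi$-avoidable.} Let $H$ be non-bipartite. Then $H$ contains an odd cycle, and hence a shortest one, $C_{2r+1}$ for some $r\ge1$. Any $C_{2r+1}$-free graph is automatically $H$-free, so it suffices to show that $C_{2r+1}$ is $\chi$-avoidable. Steiner's odd-girth result~\cite{Ste26} provides this: for every $k$ and every $g$, each graph with sufficiently large chromatic number contains a subgraph with odd girth at least $g$ and chromatic number greater than $k$. Taking $g=2r+3$, such a subgraph has no odd cycle of length at most $2r+1$. In particular it is $C_{2r+1}$-free, and hence $H$-free. For triangles, R\"odl's theorem that $\ell(4,k)$ is finite~\cite{Rod77} already gives the same conclusion.

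\emph{Bipartite graphs are not $\chi$-avoidable.} Let $H$ be bipartite. If $H$ has no edges, then every graph on at least $|V(H)|$ vertices contains $H$, while $H$-free graphs have boundedly many vertices and hence bounded chromatic number. So $H$ is not $\chi$-avoidable in this case, and we may assume $H$ has an edge. Let the parts of $H$ have sizes $a$ and $b$, and put $t=\max(a,b,2)$ and $s=t$. Then $H\subseteq K_{s,t}$, so every $K_{s,t}$-free graph is $H$-free. Hence any $H$-free subgraph of a graph $G$ is also $K_{s,t}$-free.

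Now we refute $\chi$-avoidability with $k=t+1$. By \cref{thm:biclique-free}, for every $N$ there is a graph $G_N$ with $\chi(G_N)>N$ all of whose $K_{s,t}$-free subgraphs are $(t+1)$-colourable. By the previous paragraph, every $H$-free subgraph of $G_N$ is then $(t+1)$-colourable. So no threshold on the chromatic number forces an $H$-free subgraph with chromatic number greater than $t+1$, and $H$ is not $\chi$-avoidable.

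Each step is a direct reduction. The only point needing care is the first direction: we must use the odd-girth form of Steiner's result, which excludes all short odd cycles at once, rather than a girth version, which would fail.
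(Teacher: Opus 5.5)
Your proposal is correct and is essentially the paper's argument. A non-bipartite $H$ contains an odd cycle, which is $\chi$-avoidable by R\"odl's and Steiner's (odd-girth) results. A bipartite $H$ is a subgraph of some $K_{s,t}$ with $s\ge t\ge2$, so \cref{thm:biclique-free} shows it is not $\chi$-avoidable.

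One wording slip: from $H\subseteq K_{s,t}$ it does not follow that every $K_{s,t}$-free graph is $H$-free, since that implication is backwards. The statement you actually use, that every $H$-free graph is $K_{s,t}$-free, does follow directly. Also, your separate edgeless case is unnecessary, because an edgeless $H$ embeds in $K_{t,t}$ as well.
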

	
	The rest of this note is devoted to the proof of \cref{thm:main}; the proof of \cref{thm:biclique-free} will be a straightforward generalisation thereof.
	
	\section{Key lemma}
	\label{sec:prelim}
	
	In this section, we state the key lemma behind the proof of \cref{thm:main}.
	To do so, we need a few definitions.
	
	Let $V(G)$ and $E(G)$ denote the vertex set and the edge set of a graph $G$, respectively.
	For an integer $k\ge0$, a graph $G$ is {\em $k$-degenerate} if $V(G)$ admits an ordering $v_1,\ldots,v_n$ such that every $v_i$ has at most $k$ neighbours in $\{v_j:j>i\}$.
	Thus, $G$ is $k$-degenerate if and only if all subgraphs of $G$ have minimum degree at most $k$, and if $G$ is $k$-degenerate then $\chi(G)\le k+1$, as the vertices of $G$ can be coloured greedily in the order $v_n,\ldots,v_1$.
	A {\em stable set} in a graph $G$ is a set of vertices no two of which are adjacent.
	The {\em fractional chromatic number} of $G$, denoted by $\chis(G)$, is the minimal $r\ge0$ such that there exists $f\colon\mac I\to \mab R_{\ge0}$ satisfying $r=\sum_{S\in\mac I}f(S)$ and $\sum_{S\ni v}f(S)\ge1$ for all $v\in V(G)$, where $\mac I$ is the family of all stable sets of $G$.
	Via linear programming duality, $\chis(G)$ is the maximal $r\ge0$ such that there exists $w\colon V(G)\to\mab R_{\ge0}$ satisfying $r=\sum_{v\in V(G)}w(v)$ and $\sum_{v\in S}w(v)\le1$ for all $S\in\mac I$.
	In this note, it is convenient to use the latter as the definition of $\chis(G)$.
	It follows that $\chis(G)\le\chi(G)$ for every graph $G$; indeed, every colour class in a colouring of $G$ with $\chi(G)$ colours is a stable set, which implies that $\sum_{v\in V(G)}w(v)\le\chi(G)$ for every $w\colon V(G)\to\mab R_{\ge0}$ satisfying $\sum_{v\in S}w(v)\le1$ for all $S\in\mac I$.
	
	Given these definitions, the key lemma in the proof of \cref{thm:main} is the following standalone result, which is a strengthening of \cite[Theorem~2.1]{KK26a} and which may be of independent interest.
	
	\begin{restatable}{theorem}{thmorient}
		\label{thm:orient}
		For every pair of integers $q,\Delta\ge1$, there exists a graph $G$ satisfying the following:
		\begin{itemize}
			\item $G$ is $\floor{4608q^2\ln(4q)}$-degenerate;
			\item $\chis(G)>q$; and
			\item all subgraphs of $G$ with maximum degree at most $\Delta$ are $2$-degenerate.
		\end{itemize}
	\end{restatable}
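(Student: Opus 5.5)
We will build $G$ as a random ``hypergraph-like'' layered construction in the spirit of Janzer, Steiner, and Sudakov, with an ordering of the vertices fixed in advance. Put $d=\floor{4608q^2\ln(4q)}$. Take a very long sequence of vertices $v_1,\dots,v_n$ with $n$ huge in terms of $q,\Delta$. Each $v_i$ will send exactly $d$ edges backwards, and its $d$ earlier neighbours will be chosen by a random or structured rule. This makes $G$ $d$-degenerate by construction, since each vertex has at most $d$ neighbours among earlier vertices (reversing the order gives the definition). The plan therefore has two properties to secure at once. The first is $\chis(G)>q$, which requires each vertex's back-neighbourhood to be ``spread'' enough. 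The second is that bounded-degree subgraphs are $2$-degenerate, which requires back-neighbourhoods to be chosen so that low-degree dense substructures cannot form.

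For the fractional chromatic number we take uniform weights $w\equiv q'/n$ with $q'$ slightly above $q$. It then suffices to show that every stable set has size less than $n/q$. We organise the vertices into $m$ blocks, and for each $v$ in block $j$ we choose its $d$ back-neighbours uniformly from a random subset drawn from earlier blocks. A first-moment count then applies. Fix a candidate set $S$ of density at least $1/q$. Each later vertex of $S$ must avoid $S$ among its $d$ choices, which happens with probability about $(1-1/q)^{d}\approx e^{-d/q}$. The factor $q^2\ln(4q)$ in $d$ is what makes $e^{-d/q}$ beat the entropy $\binom{n}{n/q}\approx (eq)^{n/q}$. We need the union bound to run over sets that are dense in many blocks, so the count is done block by block. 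The constant $4608$ absorbs the losses from this block decomposition.

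For the third property, suppose $H\subseteq G$ has maximum degree at most $\Delta$ and minimum degree at least $3$. In $H$, every vertex has at least $3$ neighbours, and by construction each vertex has at most $\Delta$ of them later in the order. Starting from the last vertex of $H$ and walking backwards, the edges of $H$ form a tree-like structure of bounded size, about $\Delta^{O(1)}$ in each explored ball. Because $H$ has minimum degree $3$, there are more edges than vertices, so this structure must contain a cycle. We will therefore make every short-range structure acyclic: we choose the back-neighbourhoods so that $G$ contains no subgraph with few vertices and at least as many edges as vertices, up to size $f(\Delta)$. A standard deletion argument handles this, since the expected number of such configurations is small once each vertex picks its neighbours from a huge pool. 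To avoid damaging the stable-set bound, we remove a vertex from each bad configuration rather than an edge, and then re-verify $\chis$ using that only $o(n)$ vertices are removed.

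The main obstacle is reconciling the two requirements. We must also handle large subgraphs $H$ with bounded degree that are not captured by the local exclusion of short cycles. For these we will use a structural argument specific to the ordering. Take the earliest vertex $u$ of $H$. All of its at least $3$ neighbours in $H$ lie later in the order, and the minimum degree $3$ condition then propagates forward. This should produce a dense, bounded-depth configuration among the $\le\Delta$ forward neighbours that the construction forbids. Making this propagation terminate within a bounded radius, so that it meets the short-configuration exclusion, is the step I expect to be hardest. The first thing I would try is choosing each vertex's back-neighbours in distinct blocks with geometric spacing, so that forward chains of length more than $\Delta$ cannot close up.
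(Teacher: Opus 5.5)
\textbf{There is a genuine gap in the third property.} This property is the core of the theorem, and your last paragraph leaves it open.

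Your local strategy cannot succeed. The plan is to forbid every configuration on at most $f(\Delta)$ vertices with at least as many edges as vertices, and then argue that a subgraph $H$ with minimum degree $3$ and maximum degree at most $\Delta$ must contain one. But there are $3$-regular graphs of arbitrarily large girth. So such an $H$ need not contain any bounded-size dense configuration. The cycle forced by $\edge(H)>\abs H$ can be arbitrarily long, and your forward exploration from the earliest vertex of $H$ has no reason to close up within bounded radius. It simply traverses an unbounded, locally tree-like piece of $H$.

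Worse, suppose back-neighbours are spread roughly uniformly over earlier blocks of comparable size. Then the late vertices span a sparse random-like graph of average degree $\Theta(d)$. Such graphs typically do contain bounded-degree subgraphs of minimum degree $3$, even $3$-regular ones, so no local deletion rescues that model. What is needed is a global first-moment statement covering vertex sets of unbounded size, together with a model built to make it true.

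\textbf{What the paper does.} The parts $V_1,\dots,V_C$ have double-exponentially decreasing sizes. For each edge $ij$ with $i<j$ of a triangle-free template $\Gamma$ on $[C]$, every $u\in V_i$ receives one uniformly random neighbour in $V_j$.

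The proof then runs in four steps.
\begin{enumerate}
\item The decay yields \cref{lem:verysparse}: every $X\subset L_i$ with $\abs X\le Kn_i$ spans at most $(1+\eps)\abs X$ edges. The bound is summable over all sizes, because pairs inside $L_i$ are edges with probability at most $1/n_{i-1}$.
\item Given $Q$, take the least $i$ with $\abs{R_i}<\abs Q/(10\Delta)$, so that $\abs Q\le 20\Delta n_i$.
\item Split $V(Q)$ into $X=V(Q)\cap L_i$, $Y=V(Q)\cap V_i$ and $Z=V(Q)\cap R_i$. Then $Y$ is stable and each vertex of $X$ has at most one neighbour in $Y$. Because $\Gamma$ is triangle-free, the vertices of $X$ with a neighbour in $Y$ are pairwise nonadjacent.
\item Degree counting (\cref{lem:tri}) then forces a vertex of degree at most $2$.
\end{enumerate}
The stability condition in step 3 is exactly what rules out a long cycle in $X$ with one pendant edge per vertex into a tiny $Y$. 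Any density-only argument misses that configuration.

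\textbf{The fractional part also needs changing.} Uniform weights fail in such a model, since the largest part is stable and contains almost all vertices. The paper instead puts weight $1/n_i$ on each vertex of $V_i$. The $q^2\ln(4q)$ in the degeneracy does not come from an entropy count. It comes from choosing $\Gamma$ triangle-free with independence number below $C/(4q)^2$ and maximum degree $O(q^2\ln q)$ (\cref{lem:triangle}).
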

	
	In particular, when $\Delta=3$, this result implies that there are graphs $G$ with arbitrarily large fractional chromatic number and no $3$-regular subgraphs.
	
	The proof of \cref{thm:orient} is an extension of the argument in \cite[Section~3]{KK26a} and will be completed in \cref{sec:2degen}.
	To explain the main ideas behind the proof and connect them to relevant literature, in \cref{sec:random} we will explain the proof of a weaker statement, which is sufficient for the proof of \cref{thm:main} with `$3$' replaced by `$4$'.
	Before that, in \cref{sec:shift}, we will show the derivation of \cref{thm:main} from \cref{thm:orient} via the construction of {\em line digraphs}.
	
	\section{Line digraphs}
	\label{sec:shift}
	
	This section deduces \cref{thm:main} from \cref{thm:orient}, following \cite[Sections 2 and~4]{KK26a} and \cite[Sections 2, 3, and~6]{KK26b}.
	In what follows, let $[k]:=\{1,\ldots,k\}$ for every integer $k\ge0$.
	For a graph $G$ and a set $X\subset V(G)$, let $G[X]$ denote the subgraph of $G$ induced by $X$.
	
	A {\em digraph} (also known as {\em directed graph}) $\vec G$ consists of a set of vertices, denoted by $V(\vec G)$, and a set of edges that are ordered pairs of vertices, denoted by $E(\vec G)$.
	In this note, we disallow loops in a digraph.
	The {\em underlying graph} of a digraph $\vec G$ is a simple graph $G$ on the same vertex set such that two vertices are adjacent in $G$ if and only if there is an edge in either direction between them in $\vec G$.
	For simplicity of notation, we will always put an arrow on top of a letter denoting a digraph, and we will use the same letter without an arrow to denote its underlying graph, as in the preceding sentence.
	We say that $\vec G$ is {\em acyclic} if it has no directed cycle; thus $\vec G$ is acyclic if and only if $V(\vec G)$ admits an ordering $v_1,\ldots,v_n$ such that all edges are directed from a vertex with a smaller index to a vertex with a larger index; equivalently, each $v_i$ has no in-neighbour in $\{v_j:j>i\}$.
    
	The main objects in this section are {\em line digraphs}, introduced by Harary and Norman~\cite{HN60} and defined as follows.
	The {\em line digraph} $\vec L(\vec G)$ of a digraph $\vec G$ is the digraph with vertex set $E(\vec G)$ where there is a directed edge from $(u,v)$ to $(x,y)$ if and only if $v=x$.
	(In~\cite{KK26a,KK26b}, line digraphs are called {\em arc graphs} when $\vec G$ is acyclic.)
	We use the notation $L(\vec G)$ for the underlying graph of the line digraph $\vec L(\vec G)$.
	A colouring of $L(\vec G)$ can be naturally viewed as a colouring of the edges of $\vec G$ such that no two edges $(u,v)$ and $(x,y)$ with $v=x$ get the same colour; such colourings are known as \emph{arc colourings} and were introduced by Harner and Entringer~\cite{HE72}.
	Perhaps the best-known instance of graphs of the form $L(\vec G)$ is when the base digraph $\vec G$ is a transitive tournament; they are known as {\em shift graphs} and were introduced by Erd\H os and Hajnal~\cite{EH66} as an example of triangle-free graphs with large chromatic number.
	It follows directly from the definition that the graphs of the form $L(\vec G)$ for acyclic digraphs $\vec G$ are exactly the induced subgraphs of shift graphs.
	On a historical note, shift graphs were suggested by Erd\H os and R\"odl as a counterexample candidate for \cref{prob:eh} (see~\cite{Erd78}), but they turned out to yield a positive instance~\cite{Apa22,Tar18}.
	Recently, Adenwalla, Braunfeld, Hons, Sylvester, and Zamaraev \cite[Problem~1.13]{ABHSZ26} asked whether all induced subgraphs of shift graphs are positive instances of \cref{prob:eh}.
	As it turns out, it is these graphs where the counterexamples to \cref{prob:eh} have been found.
	
	First observe that if $\vec G$ is acyclic, then $L(\vec G)$ is triangle-free, as the only way for a triangle to arise in $L(\vec G)$ is for three edges of $\vec G$ to form a directed triangle.
	Recall also the following well-known observation.
	
	\begin{lemma}[{\cite[Theorem~9]{HE72}}]
		\label{lem:shiftchi}
		$\chi(G)\le2^{\chi(L(\vec G))}$ for all digraphs $\vec G$.
	\end{lemma}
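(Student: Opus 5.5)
Given a proper colouring $c$ of $L(\vec G)$ with colours in $[m]$, where $m=\chi(L(\vec G))$, I will build a proper colouring of $G$ with at most $2^m$ colours. Since $c$ is a colouring of $L(\vec G)$, it is a colouring of the edges of $\vec G$ with the following property: whenever $(u,v)$ and $(v,y)$ are edges of $\vec G$, they receive different colours. For each vertex $v$ of $\vec G$, let
\[
\phi(v):=\{c(u,v): (u,v)\in E(\vec G)\}\subseteq[m],
\]
which is the set of colours appearing on edges entering $v$. This assigns to each vertex one of the $2^m$ subsets of $[m]$, and I will use $\phi$ as the colouring of $G$.

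The one step to verify is that $\phi$ is proper. Let $uv$ be an edge of $G$; without loss of generality $(u,v)\in E(\vec G)$. Put $\alpha:=c(u,v)$. By definition $\alpha\in\phi(v)$. Now suppose $\alpha\in\phi(u)$. Then some edge $(w,u)\in E(\vec G)$ has $c(w,u)=\alpha$. But $(w,u)$ and $(u,v)$ are adjacent in $\vec L(\vec G)$, so they cannot share a colour, a contradiction. Hence $\alpha\notin\phi(u)$, so $\phi(u)\ne\phi(v)$. Loops are disallowed, so there are no degenerate cases. Edges in both directions between $u$ and $v$ are handled by the same argument applied to either one.

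It follows that $\chi(G)\le 2^m$. I expect no real obstacle; the only point needing care is to index by in-colours at the head and to use the adjacency between the edges $(w,u)$ and $(u,v)$.
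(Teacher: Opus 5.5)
Your proof is correct and is essentially the paper's argument. The only difference is that you label each vertex by the set of colours on its incoming edges, whereas the paper uses the set of colours on its outgoing edges. The two are mirror images, and the key step is identical: the edges $(w,u)$ and $(u,v)$ are adjacent in $L(\vec G)$, so they cannot share a colour.
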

	
	\begin{proof}
		Consider a colouring of $L(\vec G)$ with $\chi(L(\vec G))$ colours; and for every $v\in V(\vec G)$, let $S_v$ be the set of colours assigned to the edges of $\vec G$ going out of $v$.
		Observe that if $(u,v)\in E(\vec G)$, then the colour of $(u,v)$ appears in $S_u$ but cannot appear in $S_v$, for otherwise there would be some $(v,w)\in E(\vec G)$ having that colour while $(u,v)$ and $(v,w)$ form an edge in $L(\vec G)$, contradicting the validity of the colouring.
		This shows that the assignment $v\mapsto S_v$ is a colouring of $G$ with at most $2^{\chi(L(\vec G))}$ colours, proving \cref{lem:shiftchi}.
	\end{proof}
	
	The next step studies the $C_4$-free subgraphs of $L(\vec G)$ when $\vec G$ is acyclic and has bounded maximum out-degree.
	For clarity (and to derive \cref{thm:biclique-free}), we will consider the more general case of $K_{s,t}$-free subgraphs of $L(\vec G)$ for $s,t\ge2$.
	The approach in this step is similar to the analysis of Sadhukhan \cite[Theorem~11]{Sad25} on the chromatic number of $K_{s,t}$-free induced subgraphs of shift graphs.
	A subdigraph $\vec H$ of $\vec L(\vec G)$ such that $H$ is $K_{s,t}$-free gives rise to a subdigraph $\vec Q$ of $\vec G$ as follows: take the vertices of $\vec H$ with out-degree at least $t$ in $\vec H$, and make them the edges of the subdigraph $\vec Q$.
	The following lemma (restricted to acyclic digraphs $\vec G$ for clarity) says that $Q$ has bounded maximum degree (depending on the maximum out-degree of $\vec G$), and it is the only place in the proof of \cref{thm:main} that uses the $C_4$-free hypothesis.
	Its case $s=t=2$ is implicit in \cite[Lemma~2.4]{KK26a} and explicit in \cite[Lemma~3.1]{KK26b}.
	
	\begin{lemma}
		\label{lem:q}
		Let $s,t\ge2$ and $d\ge1$ be integers, let $\vec G$ be an acyclic digraph with maximum out-degree at most $d$, let $H$ be a $K_{s,t}$-free subgraph of $L(\vec G)$, and let $\vec H$ be the corresponding subdigraph of $\vec L(\vec G)$.
		Let $\vec Q$ be the subdigraph of $\vec G$ with vertex set $V(\vec G)$ and edge set
        \[\left\{\,\vec e\in V(\vec H):\vec e\text{ has out-degree at least }t\text{ in }\vec H\,\right\}.\]
		Then $Q$ has maximum degree at most $d+(s-1)\binom dt$.
	\end{lemma}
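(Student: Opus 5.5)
Fix a vertex $v$ of $\vec Q$ and bound its out-degree and in-degree in $Q$ separately. The out-degree of $v$ in $\vec Q$ is at most its out-degree in $\vec G$, hence at most $d$. It remains to show that $v$ has at most $(s-1)\binom dt$ in-neighbours in $\vec Q$, i.e.\ at most that many edges $\vec e=(u,v)\in E(\vec Q)$. Since $\vec G$ is acyclic (in particular loopless and with no $2$-cycles), a single pair $\{u,v\}$ carries at most one edge of $\vec G$. So edges of $\vec Q$ at $v$ correspond to distinct neighbours, and the degree of $v$ in $Q$ is at most $d+(s-1)\binom dt$.

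For the in-degree bound, take an edge $\vec e=(u,v)\in E(\vec Q)$. By definition $\vec e$ has out-degree at least $t$ in $\vec H$. Every out-neighbour of $\vec e$ in $\vec L(\vec G)$ is an edge of the form $(v,w)$, i.e.\ an out-edge of $v$ in $\vec G$. Hence we may choose a $t$-element set $T_{\vec e}$ of out-edges of $v$ such that $\vec e$ is adjacent in $H$ to every member of $T_{\vec e}$. There are at most $\binom dt$ possible choices of $T_{\vec e}$, since $v$ has at most $d$ out-edges. Suppose now that $v$ had more than $(s-1)\binom dt$ in-edges in $\vec Q$. By pigeonhole, some $t$-set $T$ equals $T_{\vec e}$ for at least $s$ distinct in-edges $\vec e_1,\dots,\vec e_s$ of $v$.

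These $s$ edges and the $t$ edges of $T$ are $s+t$ distinct vertices of $H$. The two families are disjoint because the edges of $T$ leave $v$ while the $\vec e_i$ enter $v$. Each $\vec e_i$ is adjacent in $H$ to each member of $T$, so they span a copy of $K_{s,t}$ in $H$, contradicting the hypothesis.

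The only step requiring care is justifying that adjacency in $H$ between $\vec e_i$ and a member of $T$ is exactly the edge used, i.e.\ that $H$ is a subgraph and we only need the edges, not induced-ness. This is immediate since $K_{s,t}$-freeness refers to subgraphs. So there is no real obstacle; the argument is a direct pigeonhole over $t$-subsets of the out-neighbourhood.
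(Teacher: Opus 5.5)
Your proposal is correct and follows essentially the same argument as the paper. The out-degree of $v$ is at most $d$. For in-edges, each in-edge of $v$ in $\vec Q$ picks $t$ of the at most $d$ out-edges of $v$, and $K_{s,t}$-freeness allows at most $s-1$ in-edges per such $t$-set, giving at most $(s-1)\binom dt$ in-neighbours.
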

	
	\begin{proof}
		Let $v\in V(\vec G)$, and let $N$ be the set of out-neighbours of $v$ in $\vec G$; then $\abs N\le d$.
		For each edge $(u,v)\in E(\vec Q)$ going into $v$ in $\vec Q$, the definition of $\vec Q$ gives $t$ edges $(v,w_1),\ldots,(v,w_t)\in E(\vec G)$ that are out-neighbours of $(u,v)$ in $\vec H$; in particular $w_1,\ldots,w_t\in N$.
		On the other hand, since $H$ is $K_{s,t}$-free, for any choice of distinct $w_1,\ldots,w_t\in N$, there are at most $s-1$ elements $(u,v)\in V(\vec H)$ such that all $(v,w_1),\ldots,(v,w_t)$ are out-neighbours of $(u,v)$ in $\vec H$.
		It follows that $v$ has at most $(s-1)\binom{\abs N}t\le(s-1)\binom dt$ in-neighbours in $\vec Q$, and so $v$ has degree at most $d+(s-1)\binom dt$ in $Q$.
		This proves \cref{lem:q}.
	\end{proof}
	
	The next lemma is the first main difference between the arguments presented here and in~\cite{KK26a}.
	To explain this using the notation from \cref{lem:q}, when $s=t=2$ and $\vec G$ is acyclic, the proof of \cite[Lemma~2.4]{KK26a} and \cite[Lemma~3.2]{KK26b} observes that $\chi(H[E(\vec Q)])\le\chi(Q)$ and the graph $H[V(\vec H)\setminus E(\vec Q)]$ is $1$-degenerate and thus $2$-colourable; then it colours $H[E(\vec Q)]$ and $H[V(\vec H)\setminus E(\vec Q)]$ separately to obtain a colouring of $H$ with at most $\chi(Q)+2$ colours.
	The argument in the next lemma colours $H$ in a more efficient way; it is a simple extension of the argument behind \cite[Theorem~8]{HE72}.
	
	\begin{lemma}
		\label{lem:extend}
		Let $r\ge t\ge1$ be integers.
		Let $\vec G$ be an acyclic digraph, and let $\vec H$ be a subdigraph of $\vec L(\vec G)$.
		Let $\vec Q$ be any subdigraph of $\vec G$ such that $V(\vec Q)=V(\vec G)$ and $E(\vec Q)$ contains all edges of $\vec G$ which, considered as vertices in $\vec H$, have out-degree at least $t$ in $\vec H$.
		If $Q$ is $\binom rt$-colourable, then $H$ is $r$-colourable.
	\end{lemma}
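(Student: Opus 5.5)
The plan is to extend the argument behind \cite[Theorem~8]{HE72}: I will turn an $\binom rt$-colouring of $Q$ into an assignment of $t$-subsets of $[r]$ to the vertices of $\vec G$, and then read off colours for the edges of $\vec G$, which are the vertices of $H$. Since there are exactly $\binom rt$ subsets of $[r]$ of size $t$, a proper $\binom rt$-colouring of $Q$ gives a map $c$ from $V(\vec G)=V(\vec Q)$ to $t$-subsets of $[r]$ with $c(u)\neq c(v)$ whenever $u,v$ are adjacent in $Q$. Two distinct $t$-sets are not contained in one another, so $c(u)\setminus c(v)\neq\emptyset$ for every $(u,v)\in E(\vec Q)$. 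Every colour I assign to a vertex $(u,v)$ of $H$ will lie in $c(u)$, the set attached to its tail. Since the underlying graph $H$ is a subgraph of $L(\vec G)$, it suffices to check that, for every edge of $\vec H$ from $(u,v)$ to $(v,w)$, the two endpoints receive different colours.

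\emph{Step 1 (edges of $\vec Q$).} For each $(u,v)\in V(\vec H)\cap E(\vec Q)$, I assign to $(u,v)$ an arbitrary colour from $c(u)\setminus c(v)$.

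\emph{Step 2 (remaining vertices of $\vec H$).} Each remaining vertex $(u,v)\in V(\vec H)\setminus E(\vec Q)$ has out-degree less than $t$ in $\vec H$, by the hypothesis on $E(\vec Q)$. Fix a topological ordering of the acyclic digraph $\vec G$. I colour the remaining vertices in decreasing order of their tails $u$ in this ordering. Every out-neighbour of $(u,v)$ in $\vec H$ has the form $(v,w)$, and its tail $v$ comes after $u$. So when $(u,v)$ is processed, all its out-neighbours are already coloured: either in Step 1, or earlier in Step 2. These out-neighbours use fewer than $t$ colours, while $|c(u)|=t$. I therefore give $(u,v)$ a colour from $c(u)$ that none of its out-neighbours in $\vec H$ uses.

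\emph{Step 3 (checking every edge of $\vec H$).} Take an edge of $\vec H$ from $e=(u,v)$ to $f=(v,w)$. Note that $e$ gets a colour in $c(u)$ and $f$ gets a colour in $c(v)$.
\begin{itemize}
\item If $e\in E(\vec Q)$, its colour lies in $c(u)\setminus c(v)$, whereas the colour of $f$ lies in $c(v)$, so they differ.
\item If $e\notin E(\vec Q)$, then $e$ was coloured in Step 2 after $f$, and explicitly avoided the colour of its out-neighbour $f$.
\end{itemize}
Hence the colouring is proper and $H$ is $r$-colourable.

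The only delicate point is the interaction between the two kinds of vertices, specifically an edge from a non-$\vec Q$ vertex $e$ to a $\vec Q$-vertex $f$. This is why all $\vec Q$-edges are coloured first, and why Step 2 uses an order in which out-neighbours in $\vec H$ always precede a vertex; acyclicity of $\vec G$ is exactly what makes such an order exist. The other potential conflict, an edge from a $\vec Q$-vertex into anything, is handled by the choice $c(u)\setminus c(v)$. That choice is where properness of $c$ on $Q$, together with all sets having the same size $t$, is used.
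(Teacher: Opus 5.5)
Your proof is correct and is essentially the paper's argument. Both label the vertices of $\vec G$ by $t$-subsets of $[r]$ via the colouring of $Q$ and keep each $(u,v)$'s colour inside the set of its tail. Both then use $c(u)\setminus c(v)$ for $\vec Q$-edges, and otherwise greedily avoid the fewer than $t$ out-neighbours in an order supplied by acyclicity. The only difference is cosmetic: the paper colours everything in a single reverse-topological pass, whereas you colour the $\vec Q$-vertices first, correctly noting that their colour choice does not depend on the order.
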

	
	\begin{proof}
		Since $Q$ is $\binom rt$-colourable, it admits a colouring $f$ that maps $V(Q)=V(\vec G)$ to the $t$-element subsets of $[r]$.
		We show that $H$ has a colouring into $[r]$ such that every $(u,v)\in V(H)$ gets a colour from the set $f(u)$.
		
		We colour $H$ in an order ensuring that when we assign a colour to $(u,v)\in V(H)$, all the out-neighbours of $(u,v)$ in $H$ have already been coloured; such an order exists because $\vec G$ is acyclic and so $\vec L(\vec G)$ is acyclic as well.
		Furthermore, we always ensure that the colour of $(u,v)\in V(H)$ belongs to the set $f(u)$.
		To see that we can always find for $(u,v)$ an eligible colour in $f(u)$, observe that
		\begin{itemize}
			\item if $(u,v)\in E(\vec Q)$, then a colour for $(u,v)$ can be chosen from the set $f(u)-f(v)$, which is nonempty because $f(u)\neq f(v)$ (for $f$ is a colouring of $Q$) and both sets have the same size $t$ (note also that every already-coloured out-neighbour $(v,w)$ of $(u,v)$ received a colour in $f(v)$);
			\item if $(u,v)\notin E(\vec Q)$, then $(u,v)$ has fewer than $t$ out-neighbours in $\vec H$, so there is a colour in $f(u)$ that is not used by any out-neighbour of $(u,v)$ in $\vec H$.
		\end{itemize}
		This proves \cref{lem:extend}.
	\end{proof}
	
	We are now ready to prove \cref{thm:main} assuming \cref{thm:orient}.
	
	\begin{proof}[Proof of \cref{thm:main} assuming \cref{thm:orient}]
		Let $q:=2^k$, $d:=\floor{4608q^2\ln(4q)}$, and $\Delta:=d+\binom d2$.
		Let $G$ be the graph given by \cref{thm:orient}.
		Since $G$ is $d$-degenerate, the edges of $G$ can be directed to obtain an acyclic digraph $\vec G$ with maximum out-degree at most $d$.
		We claim that the graph $L(\vec G)$ satisfies the theorem.
		To see this, first note that $L(\vec G)$ is triangle-free (as $\vec G$ is acyclic) and \cref{lem:shiftchi} implies $2^{\chi(L(\vec G))}\ge\chi(G)\ge\chis(G)>q=2^k$; and so $\chi(L(\vec G))>k$.
		Now, let $H$ be a $C_4$-free subgraph of $L(\vec G)$, let $\vec H$ be the corresponding subdigraph of $\vec L(\vec G)$, and let $\vec Q$ be the subdigraph of $\vec G$ with vertex set $V(\vec G)$ whose edge set contains those and only those edges of $\vec G$ which, considered as vertices in $\vec H$, have out-degree at least $2$ in $\vec H$.
		By \cref{lem:q} with $s=t=2$, the graph $Q$ has maximum degree at most $d+\binom d2=\Delta$.
		Then, by the choice of $G$, the graph $Q$ is $2$-degenerate and so has chromatic number at most $3=\binom32$.
		Hence $\chi(H)\le3$ by \cref{lem:extend} with $r=3$ and $t=2$.
		This proves \cref{thm:main}.
	\end{proof}
	
	The proof of \cref{thm:biclique-free} is identical except for a different choice of some parameters.
	Specifically, in the case of $K_{s,t}$-free subgraphs, one can keep the same $q$ and $d$ and define $\Delta:=d+(s-1)\binom dt$.
	By \cref{lem:q}, the subgraph $Q$ of interest has maximum degree at most $\Delta$, and so it has chromatic number at most $3\le\binom{t+1}t$ since $t\ge2$.
	Then \cref{lem:extend} with $r=t+1$ yields $\chi(H)\le t+1$.
	
	
	\section{Low average degree}
	\label{sec:random}
	
	In this section, to illustrate the main ideas behind the proof of \cref{thm:orient}, we follow (with small simplifications) the argument in \cite[Section~3]{KK26a} and \cite[Sections 4 and~5]{KK26b} to present the proof of the following result.
	
	\begin{theorem}[{cf.~\cite[Theorem~3.1]{KK26a}}]
		\label{thm:orient1}
		For every pair of integers $q,\Delta\ge1$, there exists a graph $G$ satisfying the following:
		\begin{itemize}
			\item $G$ is $(16q^2-1)$-degenerate;
			\item $\chis(G)>q$; and
			\item all subgraphs of $G$ with maximum degree at most $\Delta$ have average degree less than $6$.
		\end{itemize}
	\end{theorem}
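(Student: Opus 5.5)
The plan is to build $G$ randomly, with a built-in acyclic orientation that makes the degeneracy bound automatic. I then verify the other two properties separately: the fractional chromatic number through an explicit weight function, and the sparsity of bounded-degree subgraphs through a first-moment count. I have not checked that the counting closes at the threshold $6$ or that the constant $16q^2$ comes out; the last paragraph flags where each could fail.

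\textbf{Construction and degeneracy.} Fix $d:=16q^2-1$ and a very large $N$. Take vertex set $[N]$ and let each vertex $v$ choose $d$ \emph{back-neighbours} $u<v$ independently. The neighbour $u$ is drawn with probability proportional to $1/u$, a logarithmically uniform distribution. With this choice every scale $[2^i,2^{i+1})$ below $v$ receives about the same share of the edges out of $v$, and any fixed $u<v$ is chosen with probability about $d/(u\ln v)$. Ordering the vertices $N,N-1,\ldots,1$, each vertex has at most $d$ neighbours later in the order. Hence $G$ is $(16q^2-1)$-degenerate deterministically.

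\textbf{Fractional chromatic number.} I will use the dual definition with weights $w(v)=c/v$, normalised so that the total weight is $c\sum_{v\le N}1/v\approx c\ln N$. The goal is to show that with high probability every stable set $S$ satisfies $\sum_{v\in S}1/v<(\ln N)/q$. Suppose $S$ has larger weight; then on a constant fraction of the dyadic scales, $S$ has density at least about $1/q$. A vertex $v\in S$ sends each of its $d$ choices into $S$ with probability at least about $(\text{weight of }S\text{ below }v)/\ln v$. Summing over scales, the expected number of edges of $G$ inside $S$ is of order $d/q^2$ times the number of heavy scales. Once $d\ge16q^2$, this is large enough to survive a union bound.

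The union bound cannot range over all sets $S$, since there are too many. Instead I will encode a heavy stable set by a short \emph{certificate}: for each scale, the number of elements of $S$ there, and the elements lying in a sparse net of scales. Alternatively I can process the scales from the bottom up and bound the probability that a greedily grown stable set stays heavy, via a martingale or Chernoff argument applied scale by scale. I expect this step to be the main obstacle, because the exponent must beat the entropy of the encoding, and this is exactly what forces $d$ to grow like $q^2$ rather than $q\ln q$.

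\textbf{Bounded-degree subgraphs.} Fix $\Delta$. I will show by a first-moment count that with high probability $G$ has no subgraph $F$ with maximum degree at most $\Delta$ and at least $3\abs{V(F)}$ edges. It suffices to consider $F$ that are minimal, and in particular connected. Fix such an $F$ on $m$ vertices and an embedding into $[N]$. Each edge $uv$ with $u<v$ is present with probability at most $d/(u\ln v)$. Summing over embeddings, I process the vertices in decreasing order: the top vertex contributes a factor of at most $N$ choices, and each further vertex costs at most the $\Delta$ possible attachments times a sum of $1/u$, which is $O(\ln N)$. Each of the at least $3m$ edges gains a factor $O(d/\ln N)$. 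Hence the expected number of copies is at most
\[
N\cdot(C\Delta\ln N)^{m}\cdot (Cd/\ln N)^{3m},
\]
where the leading factor $N$ is the choice of the top vertex, and this tends to $0$ for every fixed $m$.

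Large $m$ is the delicate part, and two issues must be handled. First, I must sum over $m$ up to $N$ rather than only fixed $m$. Second, the single factor $N$ for the top vertex has to be absorbed; I will do this by charging it to a spare $\ln N$ saving from the surplus edges, and I expect to need excess roughly $2m$ for this. If the count does not close at average degree $6$, the fallback is to keep the logarithmic-scale distribution but restrict the back-neighbour choices to well-separated scales, so that each edge gains an extra constant factor of independence.
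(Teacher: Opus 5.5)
\textbf{The third property fails for your random graph.} This is a defect of the model itself, not of the first-moment bookkeeping. The back-neighbour law decays only like $1/u$, so a few low-labelled vertices are shared by very many vertices.

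Write $H_{v-1}=\sum_{u<v}1/u$ and let $t:=\min(\Delta,d)$. For $v>t$, the probability that the first $t$ draws of $v$ are $1,\ldots,t$ in some order is $t!\prod_{u=1}^{t}\frac{1}{uH_{v-1}}=H_{v-1}^{-t}$. These events are independent over $v$, and their expected number is at least $(N-t)(1+\ln N)^{-t}\to\infty$. So with high probability at least $t$ vertices are adjacent to all of $[t]$, and $G$ contains $K_{t,t}$. This subgraph has maximum degree $t\le\Delta$ and average degree $t$, which violates the conclusion for every $\Delta\ge6$. For large $\Delta$ you even get $K_{d,d}$, of average degree $16q^2-1$.

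Your displayed estimate already signals this. It equals $N(C^4\Delta d^3)^m(\ln N)^{-2m}$, which tends to infinity for fixed $m$, not to $0$. No surplus of edges can pay for the factor $N$: an edge to a bounded-label vertex has probability $\Theta(d/\ln v)$, so it saves only a logarithm. Your fallback promises only a constant-factor gain per edge, and that cannot beat a factor of $N$ either.

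What is missing is the mechanism that makes the paper's proof work. First, every vertex sends \emph{at most one} edge into each class. Second, the class sizes decay double-exponentially, $n_{i-1}^3\ge(288\Delta C)n_1^2n_i$, rather than geometrically. The paper does not exclude dense bounded-degree subgraphs by a global count. Given $Q$ of maximum degree at most $\Delta$, it takes the least $i$ with $\abs{R_i}<\abs Q/(2\Delta)$ and splits $V(Q)$ into $X=V(Q)\cap L_i$, $Y=V(Q)\cap V_i$ and $Z=V(Q)\cap R_i$.
\begin{itemize}
\item The small classes, where the high-degree vertices of $G$ live, are absorbed into $Z$, which meets at most $\Delta\abs Z<\abs Q/2$ edges.
\item $Y$ is stable and receives at most $\abs X$ edges, one from each vertex of $X$. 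This is exactly what fails in your model, where all $d$ draws can land in $[d]$.
\item $\abs X\le4\Delta n_i$, while each pair inside $L_i$ is an edge with probability at most $1/n_{i-1}$. A first-moment count over such small sets then shows that $G[X]$ has at most $3\abs X/2$ edges.
\end{itemize}
\Cref{lem:absorb} then gives average degree below $6$.

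The same class structure also settles your concern about the union bound in the fractional part. With weights $1/n_i$ and $n_{i-1}\ge2n_i$, the union bound can range over all subsets of $V_i\cup\cdots\cup V_C$. There are at most $2^{2n_i}$ of them, and a good set whose least class is $i$ is stable with probability at most $e^{-3n_i}$.
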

	
	This is analogous to \cref{thm:orient}, with `are $2$-degenerate' replaced by `have average degree less than $6$' and a slightly strengthened first outcome.
	In the last outcome, average degree less than $6$ implies minimum degree at most $5$, which in turn implies $5$-degeneracy by repeatedly taking out vertices of degree at most $5$ and using the third condition for each remaining subgraph; $5$-degeneracy then implies $6$-colourability.
	The same result with `$6$' replaced by `$4$' is essentially proved in \cite[Theorem~3.1]{KK26a} and implicitly in \cite[Section~5]{KK26b}.
	The choice of `$6$' here is not only for clarity of exposition, but also because \cref{thm:orient1} suffices to prove a variant of \cref{thm:main} with `$3$' replaced by `$4$' (itself already improving on \cref{thm:girthchi}).
	To see the latter, recall from the discussion above that in \cref{thm:orient1}, all subgraphs of $G$ with maximum degree at most $\Delta$ have chromatic number at most $6=\binom42$, so it suffices to adapt the deduction of \cref{thm:main} from \cref{thm:orient} in \cref{sec:shift} (now with $r=4$ and $t=2$); we omit the details.
	
	In what follows, for a simple graph $G$, let $\abs G:=\abs{V(G)}$; and for every $S\subset V(G)$, let $G[S]$ be the subgraph of $G$ induced on $S$.
	The desired graph $G$ will be constructed randomly in the following way.
	For $C\ge2$, consider integers $n_1\ge\ldots\ge n_C\ge1$, and let $G$ be a random multipartite graph with parts $V_1,\ldots,V_C$ defined as follows:
	\begin{itemize}
		\item $\abs{V_i}=n_i$ for all $i\in[C]$; and
		\item independently for all $i,j\in[C]$ with $i<j$ and for each $u\in V_i$, pick a uniformly random vertex $v$ in $V_j$ and make $uv$ an edge of $G$.
	\end{itemize}
	
	This exact random graph model was recently analysed by Janzer, Steiner, and Sudakov~\cite{JSS26}, who used it to construct graphs with large fractional chromatic number and with no $4$-regular subgraphs.
	An analogous model but with edges only between $V_1$ and $V_2\cup\cdots\cup V_C$ (so restricted to $i=1$) was introduced by Pyber, R\"odl, and Szemer\'edi~\cite{PRS95} in the 1990s to construct graphs with superlinear density and no $3$-regular subgraph, and later adapted by Chakraborti, Janzer, Methuku, and Montgomery~\cite{CJMM26} for improved constructions of graphs with high density and no regular subgraphs, and by Dvo\v r\'ak, Ossona de Mendez, and Wu~\cite{DOW20} to construct bipartite graphs of high density with no $1$-subdivisions of graphs with Hall ratio greater than $18$.
	A model very similar to the one in~\cite{JSS26}, such that the edges $uv$ with $u\in V_i$ and $v\in V_j$ for $i<j$ are chosen independently at random with probability $|V_j|^{-1}$, was considered by Steiner~\cite{Ste25}.
	
	As we shall see below, much of the argument in this section (and in \cite[Section~3]{KK26a} and \cite[Section~5]{KK26b}) essentially appeared in the aforesaid work of Janzer, Steiner, and Sudakov~\cite{JSS26}.
	The main difference from~\cite{JSS26} is the change in the maximum degree bound from a universal constant (which is $4$ in~\cite{JSS26}) to the more flexible parameter $\Delta$, which allows for a combination with \cref{lem:q}.
	
	Given the definition of $G$, order all vertices of $V_i$ before all vertices of $V_j$ whenever $i<j$, in an arbitrary order within each part.
	This ordering witnesses that $G$ is $(C-1)$-degenerate, which already confirms the first property in \cref{thm:orient1} with $C=16q^2$.
	Let us next arrange the sequence $(n_i)_{i=1}^C$ so that the second property $\chis(G)>q$ holds with high probability.
	Here, to simplify our exposition, we will first assume an {\em exponential} decay of this sequence, in the sense that
	\begin{equation}
		\label{eq:exp}
		n_{i-1} \ge 2n_i \quad\text{for all }i\in\{2,\ldots,C\},
	\end{equation}
	which implies that
	\[n_i+n_{i+1}+\cdots+n_C \le 2n_i \quad\text{for all }i\in[C].\]
	
	The aim is to show that under the assumption \eqref{eq:exp}, if $C=16q^2$ then $\chis(G)>q$ with high probability, as desired.
	To do so, it may be helpful to view $G$ as an `unbalanced' variant of the sparse Erd\H os--R\'enyi random graphs that Erd\H os~\cite{Erd59} employed to construct graphs with large girth and large chromatic number.
	In this sense, to show that $\chis(G)$ is large, it is natural to assign weight $1/n_i$ to each vertex in $V_i$ for every $i$, and then to show that $G$ has no heavy stable set with high probability.
	The next lemma does exactly that.
	Its proof, following the argument in \cite[Lemma~3.5]{KK26a} and \cite[Subsection~5.1]{KK26b}, is a simplified version of the proof of \cite[Lemma~2.3]{JSS26}; the latter proves $\chis(G)>q$ when $C=\Omega(q\log q)$ but requires a much faster decay of the sequence $(n_i)_{i=1}^C$.
	
	\begin{lemma}[{cf.~\cite[Lemma~2.3]{JSS26}, \cite[Lemma~3.5]{KK26a}, and \cite[Subsection~5.1]{KK26b}}]
		\label{lem:stable}
		Assume \eqref{eq:exp}.
		If $C=16q^2$, then $\chis(G)>q$ with probability greater than $1/2$.
	\end{lemma}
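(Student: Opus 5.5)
The plan is the first-moment method with the weighting suggested before the lemma. Fix a small $\eps>0$ and give every $v\in V_i$ weight $w(v):=\frac{1+\eps}{16q\,n_i}$. The total weight is then $C\cdot\frac{1+\eps}{16q}=(1+\eps)q>q$. For a stable set $S$ write $\alpha_i:=\abs{S\cap V_i}/n_i\in[0,1]$. It suffices to show that, with probability greater than $1/2$, every stable set satisfies $\sum_i\alpha_i\le 16q/(1+\eps)$. So assume some stable set $S$ has $\sum_i\alpha_i>16q/(1+\eps)$, which for small $\eps$ exceeds $15q$.

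First I discard light parts. Let $I:=\{i:\alpha_i\ge 1/(4q)\}$. The parts outside $I$ contribute at most $C/(4q)=4q$, so $\sum_{i\in I}\alpha_i>11q$. Let $i_0\in I$ be the largest index with $A:=\sum_{j\in I,\,j>i_0}\alpha_j\ge 6q$. Such an index exists because each $\alpha_j\le1$; concretely, walk down $I$ from the top until the tail sum first reaches $6q$.

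Now I bound the probability that a fixed configuration is stable. Let $T:=S\cap(V_{i_0}\cup V_{i_0+1}\cup\cdots\cup V_C)$. Every $u\in S\cap V_{i_0}$ independently picks a uniform neighbour in each $V_j$ with $j>i_0$, and $T$ is stable only if all these picks avoid $S\cap V_j$. Hence
\[\Pr[T\text{ stable}]\le\prod_{j\in I,\,j>i_0}(1-\alpha_j)^{\alpha_{i_0}n_{i_0}}\le\exp(-\alpha_{i_0}n_{i_0}A)\le\exp\bigl(-\tfrac{n_{i_0}}{4q}\cdot6q\bigr)=e^{-3n_{i_0}/2}.\]

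Next comes the union bound over $i_0\in[C]$ and over all subsets $T$ of $V_{i_0}\cup\cdots\cup V_C$. By \eqref{eq:exp} this set has at most $2n_{i_0}$ vertices, so there are at most $4^{n_{i_0}}=e^{(\ln 4)n_{i_0}}$ choices of $T$. The failure probability is therefore at most $\sum_{i_0}e^{(\ln4-3/2)n_{i_0}}$. Since $\ln4<1.39<3/2$, each term decays exponentially in $n_{i_0}$, and summing the geometric-type series (again using \eqref{eq:exp}, so the $n_i$ are distinct and grow geometrically downwards) gives a total below $1/2$.

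The main obstacle is the entropy-versus-penalty balance. The naive count $\prod_i\binom{n_i}{\alpha_in_i}$ loses a factor $\ln(1/\alpha_i)$ against the penalty $\alpha_in_iA_i$. That loss is the reason for choosing $i_0$ so that a single part of weight at least $1/(4q)$ sees tail weight at least $6q$. With that choice the entropy of everything at or after $i_0$, which is only $O(n_{i_0})$ by exponential decay, is dominated by the penalty from $V_{i_0}$ alone. The constants $1/(4q)$ and $6q$ may need minor tuning; the requirement is that $\sum_{i\in I}\alpha_i$ exceeds $6q+1$ and that $(\text{threshold})\times(\text{tail})$ exceeds $\ln4$.

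One caveat: the final sum is below $1/2$ only if $n_C$ is not tiny, say $n_C\ge c\ln C$ for a suitable constant $c$. I expect this to be harmless, since it can be arranged when the sequence $(n_i)$ is chosen, and the lemma is presumably applied in that regime.
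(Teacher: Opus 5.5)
Your argument has the same structure as the paper's: weights proportional to $1/n_i$, discarding parts below a threshold, pivoting on one heavy part, and counting candidate sets at or after the pivot by $2^{2n_{i_0}}$ via \eqref{eq:exp}. The gap is the last step, which does not go through as written. With threshold $1/(4q)$ and tail $6q$ you only get the exponent $\lambda=3/2$. The contribution of pivot $i_0$ is then $e^{-(3/2-\ln4)n_{i_0}}\approx e^{-0.11\,n_{i_0}}$, which is close to $1$ when $n_{i_0}$ is small. Exponential decay of the individual terms does not make the sum over $i_0\in[C]$ smaller than $1/2$.

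You patch this by assuming $n_C\ge c\ln C$. That is not a hypothesis of the lemma, which gives only $n_C\ge1$ and \eqref{eq:exp}. Nor is it the regime where the lemma is used: in the proof of \cref{thm:orient1}, $n_C=D^{3^{C-1}-3^{C-1}}=1$. Your stated criterion ``(threshold)$\times$(tail) $>\ln4$'' is also the wrong target. Using only that the $n_i$ are distinct positive integers, what suffices is $4e^{-\lambda}<1/3$, i.e.\ $\lambda>\ln12$.

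The gap is easy to close in either of two ways.
(a) Raise the threshold to $1/(2q)$, as the paper does. Light parts then contribute less than $C/(2q)=8q$, so the heavy weight exceeds $7q$, and your $i_0$ still exists since $7q-1\ge6q$. Now $\lambda\ge\frac1{2q}\cdot6q=3$, and $\sum_i(4e^{-3})^{n_i}\le\sum_{n\ge1}4^{-n}=1/3$.
(b) Keep your constants, but note that $A\ge6q$ with every $\alpha_j\le1$ forces at least $6q$ indices of $I$ above $i_0$. So only $i_0\le C-6q$ occur in the union bound, \eqref{eq:exp} gives $n_{i_0}\ge2^{C-i_0}n_C\ge2^{6q}\ge64$, and the sum is at most $\sum_{m\ge6}e^{-2^m/10}$, far below $1/2$.

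Finally, choosing the \emph{largest} index with tail at least $6q$ gains you nothing. The count $2^{2n_{i_0}}$ is the same for any pivot. The paper instead pivots on the \emph{least} heavy index and uses all the remaining heavy weight as the penalty. With that pivot, even your threshold $1/(4q)$ already gives $\lambda>(11q-1)/(4q)\ge5/2>\ln12$.
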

	
	\begin{proof}
		For each $i\in[C]$ and $v\in V_i$, let $w(v):=1/n_i$.
		For each $S\subset V(G)$, let $w(S):=\sum_{v\in S}w(v)$.
		Thus $w(V_i)=1$ for all $i\in[C]$, and $w(V(G))=C$.
		Let $w_{\max}$ be the maximum $w(S)$ over all stable sets $S\subset V(G)$.
		If $w_{\max}<C/q$, then the weight function $w/w_{\max}$ witnesses $\chis(G)\ge C/w_{\max}>q$, by the definition of fractional chromatic number.
		Therefore, it suffices to show that $G$ has a stable set of weight at least $C/q=16q$ with probability less than $1/2$.
		
		To see this, we say that a subset $S\subset V(G)$ is {\em good} if the following hold:
		\begin{itemize}
			\item $w(S)\ge8q$; and
			\item there exists $I\subset[C]$ with $w(S\cap V_j)=0$ for all $j\in[C]\setminus I$ and $w(S\cap V_j)\ge1/(2q)$ for all $j\in I$.
		\end{itemize}
		
		Observe that every $S\subset V(G)$ with $w(S)\ge16q$ contains a good subset.
		Indeed, for $I:=\{j\in[C]:w(S\cap V_j)\ge1/(2q)\}$ we have $\sum_{j\in I}w(S\cap V_j)\ge w(S)-C\cdot 1/(2q)\ge8q$; and so $S':=\bigcup_{j\in I}(S\cap V_j)$ is a good subset of $S$.
		
		To prove the lemma, it thus remains to show that $G$ has a good stable set with probability less than $1/2$.
		To this end, for each $i\in[C]$ let $B_i$ be the set of good $S\subset V(G)$ for which $i$ is the least index satisfying $w(S\cap V_i)\ge1/(2q)$; then $S\subset V_i\cup V_{i+1}\cup\cdots\cup V_C$ for all $S\in B_i$.
		Now, fix $i\in[C]$, and note that $\abs{B_i}\le2^{n_i+n_{i+1}+\cdots+n_C}\le2^{2n_i}$.
		Let $S\in B_i$ with $I=\{j\in[C]:w(S\cap V_j)\ge1/(2q)\}$; then $i$ is the least element in $I$.
		Let $w_j:=w(S\cap V_j)=\abs{S\cap V_j}/n_j\le1$ for all $j\in I$; then $\abs I\ge w(S)\ge8q>1$.
		If $S$ is stable in $G$, then $G$ contains no edge between $S\cap V_i$ and $\bigcup_{j\in I\setminus\{i\}}(S\cap V_j)$; and the latter event occurs with the following probability (note that $w_i\cdot w(S)-w_i^2\ge(2q)^{-1}\cdot 8q-1=3$):
		\[\prod_{j\in I\setminus\{i\}}(1-w_j)^{\abs{S\cap V_i}}
		\le e^{-\abs{S\cap V_i}\sum_{j\in I\setminus\{i\}}w_j}
		= e^{-w_in_i\cdot(w(S)-w_i)}
		= e^{-(w_i\cdot w(S)-w_i^2)n_i}
		\le e^{-3n_i}.\]
		
		Since the above holds for all $i\in[C]$, the union bound implies that $G$ contains a good stable set with probability at most
		\[\sum_{i\in[C]}\abs{B_i}\cdot e^{-3n_i}
		\le \sum_{i\in[C]}2^{2n_i}\cdot e^{-3n_i}
		\le \sum_{i\in[C]}2^{-2n_i}
		< \sum_{n\ge1}2^{-2n} < 1/2,\]
		where the third inequality holds since we are assuming \eqref{eq:exp}.
		This proves \cref{lem:stable}.
	\end{proof}
	
	It remains to show that the third property of \cref{thm:orient1} holds with high probability.
	To do so, we employ the following simple lemma, a stronger version of which is implicit in the proof of \cite[Lemma~3.3]{KK26a} and explicit in \cite[Proposition~4.1]{KK26b} and which may be thought of as a very basic instance of the `absorption method' in extremal combinatorics: removing a negligible-size set of vertices of bounded degree from a graph does not significantly alter the edge density.
	
	\begin{lemma}[{cf.~\cite[Proposition~4.1]{KK26b}}]
		\label{lem:absorb}
		Let $\Delta\ge1$.
		Let $Q$ be a graph with $\abs Q\ge1$ and maximum degree at most $\Delta$.
		Assume that $V(Q)$ admits a partition $(X,Y,Z)$ allowing empty parts, such that:
		\begin{itemize}
			\item $Q[X]$ has at most $3\abs X/2$ edges;
			\item $Y$ is stable in $Q$;
			\item $Q$ has at most $\abs X$ edges between $X$ and $Y$; and
			\item $\abs Z<\abs Q/(2\Delta)$.
		\end{itemize}
		Then $Q$ has average degree less than $6$.
	\end{lemma}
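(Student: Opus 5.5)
Let $n:=\abs Q$ and $m:=\abs{E(Q)}$. The average degree is $2m/n$, so it suffices to show $m<3n$. The plan is to count the edges of $Q$ by where their endpoints lie in the partition $(X,Y,Z)$, bounding each class with one of the four hypotheses.

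The edge classes are handled as follows.
\begin{itemize}
\item Edges inside $X$: at most $3\abs X/2$, by the first hypothesis.
\item Edges inside $Y$: none, since $Y$ is stable.
\item Edges between $X$ and $Y$: at most $\abs X$, by the third hypothesis.
\item Edges with at least one endpoint in $Z$: at most $\Delta\abs Z$, because every such edge is incident to a vertex of $Z$ and each vertex has degree at most $\Delta$. By the fourth hypothesis this is less than $\Delta\cdot n/(2\Delta)=n/2$.
\end{itemize}

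Summing these bounds gives
\[m\le \tfrac32\abs X+\abs X+\Delta\abs Z<\tfrac52\abs X+\tfrac n2\le\tfrac52 n+\tfrac n2=3n.\]
The inequality is strict because of the strict inequality $\Delta\abs Z<n/2$. This strictness needs $n\ge1$, so that $n/(2\Delta)$ is a genuine positive bound; if $Z=\emptyset$ the strict inequality still holds, since $0<n/2$. Hence $2m/n<6$.

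There is no real obstacle here. The only points needing care are to count edges touching $Z$ just once via $\Delta\abs Z$ (which over-counts rather than under-counts) and to use $\abs X\le n$ at the end.
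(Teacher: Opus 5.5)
Your proposal is correct and follows the paper's proof essentially verbatim: both bound the edges of $Q[X\cup Y]$ by $5\abs X/2$ using the first three hypotheses and the edges touching $Z$ by $\Delta\abs Z<\abs Q/2$, giving fewer than $3\abs Q$ edges. The only difference is that you split the $X\cup Y$ edges into three classes explicitly, whereas the paper combines them in one line.
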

	
	\begin{proof}
		By the first three properties, the number of edges in $Q[X\cup Y]$ is at most $5\abs X/2$.
		The last property implies that $Q$ has at most $\Delta\abs Z<\abs Q/2$ edges with an endpoint in $Z$.
		This implies that $Q$ has fewer than $5\abs X/2+\abs Q/2\le3\abs Q$ edges in total, so it has average degree less than $6$.
		This proves \cref{lem:absorb}.
	\end{proof}
	
	The plan is to apply \cref{lem:absorb} to every subgraph $Q$ of $G$ with $\abs Q\ge1$ and maximum degree at most $\Delta$.
	Given the multipartite setting, a natural choice for $X,Y,Z$ is $X=V(Q)\cap\bigcup_{1\le j<i}V_j$, $Y=V(Q)\cap V_i$, and $Z=V(Q)\cap\bigcup_{i<j\le C}V_j$ for some $i\in[C]$.
	Thus, for notational convenience, define
	\[L_i := \bigcup_{1\le j<i}V_j \quad\text{and}\quad
	R_i := \bigcup_{i<j\le C}V_j \quad\text{for all }i\in[C].\]
	
	It remains to ensure that $Q[X]$ satisfies the first property in \cref{lem:absorb} by analysing the random multipartite graph $G$.
	To carry this out, we want to show that with high probability, all small subgraphs of each left-hand segment $L_i$ have average degree at most $3$.
	The analysis will assume further that the sequence $(n_i)_{i=1}^C$ satisfies the following much sharper {\em double-exponential} decay:
	\begin{equation}
		\label{eq:dexp}
		n_{i-1}^3 \ge (72KC)\cdot n_1^2n_i \quad\text{for all }i\in\{2,\ldots,C\},
	\end{equation}
	where $K\ge1$ is some parameter to be chosen later.
	Under this assumption, the following lemma, which (with a different choice of constants) is implicit in the proof of \cite[Lemma~2.1]{JSS26}, guarantees the desired property with high probability.
	
	\begin{lemma}[{cf.~\cite[Lemma~2.1]{JSS26}, \cite[Lemma~3.4]{KK26a}, and \cite[Subsections 5.2 and~5.3]{KK26b}}]
		\label{lem:sparse}
		Assume \eqref{eq:dexp}.
		Then $G$ satisfies the following with probability greater than $1/2$: for all $i\in\{2,\ldots,C\}$ and all $X\subset L_i$ with $\abs X\le Kn_i$, $G[X]$ has at most $3\abs X/2$ edges.
	\end{lemma}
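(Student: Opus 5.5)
The plan is a first-moment (union bound) argument over all candidate dense sets. Fix $i\in\{2,\ldots,C\}$ and an integer $m$ with $1\le m\le Kn_i$. Call a set $X\subset L_i$ with $\abs X=m$ \emph{bad} if $G[X]$ has more than $3m/2$ edges. Then $G[X]$ contains some set $F$ of exactly $e:=\floor{3m/2}+1$ edges, and I will bound the expected number of pairs $(X,F)$ for which all edges of $F$ are present. A bad set needs $\binom m2>3m/2$, i.e.\ $m\ge5$, so only $m\ge5$ matters. This is exactly what will make the sum over $i$ converge.

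First I will record two consequences of \eqref{eq:dexp}. Using $n_1\ge n_{i-1}$, it gives $n_{i-1}^3\ge72KC\,n_1^2n_i\ge72\,n_{i-1}^2n_i$, hence $n_{i-1}\ge72n_i$. In particular \eqref{eq:exp} holds, so $\abs{L_i}\le n_1+\cdots+n_C\le2n_1$.

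Next I need the probability that a fixed edge set $F$ inside $X$ is present. Each edge $uv$ with $u\in V_a$, $v\in V_j$, $a<j$, is produced by the independent random choice attached to the pair $(u,j)$, and it appears with probability $1/n_j\le1/n_{i-1}$, because $j\le i-1$. Distinct edges of $F$ either come from distinct pairs $(u,j)$, and are then independent, or they come from the same pair, and then the probability is $0$. So $\Pr[F\subset E(G)]\le n_{i-1}^{-e}$.

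For the counting, the number of choices of $X$ is at most $\binom{2n_1}{m}\le(2\mathrm e n_1/m)^m$. The number of choices of $F$ is at most $\binom{\binom m2}{e}\le(\mathrm e m^2/(2e))^{e}\le(\mathrm e m/3)^{e}$. Combining these with $m\le Kn_i$ and \eqref{eq:dexp}, the expected number of bad $X$ of size $m$ should be at most roughly
\[\Bigl(\frac{2\mathrm en_1}{m}\Bigl(\frac{\mathrm em}{3n_{i-1}}\Bigr)^{3/2}\Bigr)^{m}
\le\Bigl(\frac{c\,n_1^2Kn_i}{n_{i-1}^3}\Bigr)^{m/2}
\le\Bigl(\frac{c}{72C}\Bigr)^{m/2},\]
with an absolute constant $c=4\mathrm e^5/27\approx22$. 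One caveat: $e$ exceeds $3m/2$ by up to $1$, which contributes an extra factor of about $\mathrm em/(3n_{i-1})\le1$, so it is harmless.

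Finally I sum over $m\ge5$ and over $i$. For each fixed $i$, the sum over $m$ is a geometric series with ratio at most $\sqrt{1/(3C)}$. Since it starts at $m=5$, it is at most about $(1/(3C))^{5/2}\cdot2\le1/(4C)$. Summing over the fewer than $C$ values of $i$ then gives a failure probability below $1/2$, which is what the lemma asserts.

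The main obstacle is bookkeeping of constants. The union bound must not lose a factor of $C$ per $i$. This is why I count $X$ inside $L_i$ using $\abs{L_i}\le2n_1$ rather than $Cn_1$, and why I choose edges among pairs of $X$ rather than among the $mC$ random slots. I also need the bound $m\ge5$ so that summing over $i$ costs only one factor of $C$. If the constant $72$ turns out to be too tight, I would sharpen the binomial estimates, or use $m\ge5$ to absorb the slack.
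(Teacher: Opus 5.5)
Your proposal is correct and is essentially the paper's proof. The paper also takes a union bound over $X\subset L_i$ of size $m$ (using $\abs{L_i}\le2n_1$) and over edge sets $F$ of size about $3m/2$, each present with probability at most $n_{i-1}^{-\abs F}$ by independence or mutual exclusivity of the random choices, then applies \eqref{eq:dexp} and a geometric sum over $m$ and $i$. The differences are only in constants: you use $\binom nk\le(\mathrm{e}n/k)^k$ and $m\ge5$, while the paper uses $\binom nk\le(3n/k)^k$ and $r\ge4$. One small point: your caveat $\mathrm{e}m/(3n_{i-1})\le1$ needs $n_{i-1}\ge Kn_i$, which follows from your own derivation if you keep the factor $KC$ in $n_{i-1}\ge72KC\,n_i$ instead of dropping it.
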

	
	\begin{proof}
		For each $i\in\{2,\ldots,C\}$ and each integer $r$ with $4\le r\le Kn_i$ (in particular $r\le Kn_i\le n_{i-1}$), let $E_{i,r}$ be the event that there exists $X\subset L_i$ such that $\abs X=r$ and $G[X]$ has at least $3r/2$ edges.
		Every pair of vertices in $L_i$ becomes an edge of $G$ with probability at most $1/n_{i-1}$.
		For every set $P$ of pairs of vertices in $L_i$, the events that the pairs in $P$ become edges of $G$ are independent unless $P$ contains two pairs $uv$ and $uv'$ with $u\in V_j$ and $v,v'\in V_s$ where $1\le j<s<i$; such two pairs cannot be edges of $G$ simultaneously.
		Consequently, the probability that all pairs in $P$ become edges of $G$ is at most $n_{i-1}^{-\abs P}$.
		Since $\abs{L_i}=n_1+\cdots+n_{i-1}\le2n_1$ and by the inequality $\binom nk\le(\frac{3n}k)^k$ for all $n\ge k\ge1$, the union bound over all sets $X\subset L_i$ with $\abs X=r$, and all sets $P$ of $\ceil{3r/2}$ pairs of vertices in $X$, implies that $E_{i,r}$ occurs with probability at most
		\[\begin{aligned}
			\binom{\abs{L_i}}r\cdot\binom{r(r-1)/2}{\ceil{3r/2}}\cdot n_{i-1}^{-\ceil{3r/2}}
			&\le \left(\frac{6n_1}r\right)^r\cdot r^{\ceil{3r/2}}\cdot n_{i-1}^{-\ceil{3r/2}}\\
			&\le \left(\frac{6n_1}r\right)^r\cdot\left(\frac r{n_{i-1}}\right)^{3r/2}
			= \left(\frac{36n_1^2r}{n_{i-1}^3}\right)^{r/2}
			\le \left(\frac{36Kn_1^2n_i}{n_{i-1}^3}\right)^{r/2}
			\le (2C)^{-r/2},
		\end{aligned}\]
		where the last inequality holds since we are assuming \eqref{eq:dexp}.
		Thus, since $L_1=\emptyset$, summing over $i\in\{2,\ldots,C\}$ and $4\le r\le Kn_i$ and applying the union bound imply that the probability that there exist $i\in[C]$ and $X\subset L_i$ such that $\abs X\le Kn_i$ and $G[X]$ has more than $3\abs X/2$ edges is at most
		\[C\cdot\sum_{r\ge4}(2C)^{-r/2} = \frac1{4C-2(2C)^{1/2}} \le 1/4 < 1/2.\]
		This proves \cref{lem:sparse}.
	\end{proof}
	
	Combining \cref{lem:absorb,lem:sparse} then gives the following.
	
	\begin{lemma}[{cf.~\cite[Lemma~3.3]{KK26a}}]
		\label{lem:bounded}
		Let $\Delta\ge1$.
		If $n_{i-1}^3\ge(288\Delta C)\cdot n_1^2n_i$ for all $i\in\{2,\ldots,C\}$, then with probability greater than $1/2$, all subgraphs of $G$ with maximum degree at most $\Delta$ have average degree less than $6$.
	\end{lemma}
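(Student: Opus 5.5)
We will apply \cref{lem:sparse} with $K:=4\Delta C$, so that the hypothesis $n_{i-1}^3\ge(288\Delta C)n_1^2n_i=(72KC)n_1^2n_i$ is exactly \eqref{eq:dexp}. With probability greater than $1/2$, for every $i\in\{2,\ldots,C\}$ and every $X\subset L_i$ with $\abs X\le 4\Delta Cn_i$, the graph $G[X]$ has at most $3\abs X/2$ edges. We fix such an outcome of $G$ and show deterministically that every subgraph $Q$ of $G$ with $\abs Q\ge1$ and maximum degree at most $\Delta$ has average degree less than $6$, by verifying the hypotheses of \cref{lem:absorb}.

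Given $Q$, write $q_j:=\abs{V(Q)\cap V_j}$. We choose the index $i$ to be the largest $i\in[C]$ such that $q_i+q_{i+1}+\cdots+q_C\ge\abs Q/(2\Delta)$. This sum equals $\abs Q>\abs Q/(2\Delta)$ for $i=1$, so $i$ exists. We then set $X:=V(Q)\cap L_i$, $Y:=V(Q)\cap V_i$ and $Z:=V(Q)\cap R_i$.

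The four conditions are checked as follows.
\begin{itemize}
\item $\abs Z=q_{i+1}+\cdots+q_C<\abs Q/(2\Delta)$ by the maximality of $i$. If $i=C$, then $Z=\emptyset$.
\item $Y\subset V_i$ is stable because $V_i$ is a part of the multipartite graph $G$.
\item Every vertex $u\in X$ lies in some $V_j$ with $j<i$, and by construction it has exactly one neighbour in $V_i$ in $G$. Hence $Q$ has at most $\abs X$ edges between $X$ and $Y$. This is where the single-random-neighbour structure of the model is used.
\item For $Q[X]$: if $i=1$, then $X=\emptyset$. Otherwise we need $\abs X\le 4\Delta Cn_i$. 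By the choice of $i$ we have $\abs Q\le 2\Delta(q_i+\cdots+q_C)\le 2\Delta(n_i+\cdots+n_C)\le 2\Delta Cn_i$, since $n_i\ge n_{i+1}\ge\cdots$. So $\abs X\le\abs Q\le 2\Delta Cn_i\le Kn_i$, and \cref{lem:sparse} gives that $G[X]$, and hence $Q[X]$, has at most $3\abs X/2$ edges.
\end{itemize}
\Cref{lem:absorb} then gives that $Q$ has average degree less than $6$.

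The main obstacle is choosing $i$ so that $Z$ is small while $X$ still falls within the size window of \cref{lem:sparse}. The extremal choice of $i$ handles both at once, and the generous constant $K=4\Delta C$ absorbs the crude bound $\sum_{j\ge i}n_j\le Cn_i$. This bound avoids relying on \eqref{eq:exp}, which is not assumed here; it would follow anyway from the hypothesis via $n_{i-1}^3\ge 288\,n_1^2n_i\ge 288\,n_{i-1}^2 n_i$.
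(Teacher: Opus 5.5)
\textbf{There is a genuine error in your choice of $K$.} With $K:=4\Delta C$ one gets $72KC=288\Delta C^2$, not $288\Delta C$. So the hypothesis of the lemma is \eqref{eq:dexp} only for $K=4\Delta$, which is the value the paper uses. Consequently \cref{lem:sparse} only controls sets $X\subset L_i$ with $\abs X\le 4\Delta n_i$. Your estimate $\abs X\le 2\Delta(n_i+\cdots+n_C)\le 2\Delta Cn_i$ does not place $X$ in that window once $C>2$, so the step `$\abs X\le 2\Delta Cn_i\le Kn_i$' is unjustified. Running \cref{lem:sparse} with $K=4\Delta C$ would need the stronger decay $n_{i-1}^3\ge 288\Delta C^2n_1^2n_i$, which is not assumed. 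The crude bound $\sum_{j\ge i}n_j\le Cn_i$ is therefore not affordable.

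\textbf{The repair is the observation in your last sentence.} Since $n_1\ge n_{i-1}$, the hypothesis gives $n_{i-1}^3\ge 288\Delta C\,n_{i-1}^2n_i$. Hence $n_{i-1}\ge 2n_i$, so \eqref{eq:exp} holds and $n_i+\cdots+n_C\le 2n_i$. Then $\abs X\le\abs Q\le 2\Delta(n_i+\cdots+n_C)\le 4\Delta n_i$, which is exactly the window of \cref{lem:sparse} with $K=4\Delta$.

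With this change your argument coincides with the paper's proof. You choose $i$ from the counts $q_j$ of $Q$, whereas the paper takes $i$ minimal with $\abs{R_i}<\abs Q/(2\Delta)$; this is an inessential variant and works equally well. Your checks of the other three hypotheses of \cref{lem:absorb} are correct.
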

	
	\begin{proof}
		Let $E$ be the event that for all $i\in[C]$ and all $X\subset L_i$ with $\abs X\le4\Delta n_i$, $G[X]$ has at most $3\abs X/2$ edges.
		Since $(n_i)_{i=1}^C$ satisfies \eqref{eq:dexp} with $K=4\Delta$, \cref{lem:sparse} implies that $E$ occurs with probability greater than $1/2$.
		It suffices to show that if $E$ occurs, then every subgraph $Q$ of $G$ with $\abs Q\ge1$ and maximum degree at most $\Delta$ has average degree less than $6$.
		Hence, by \cref{lem:absorb}, it remains to prove that if $E$ occurs then every such $Q$ admits a vertex partition $(X,Y,Z)$ such that:
		\begin{itemize}
			\item $Q[X]$ has at most $3\abs X/2$ edges;
			\item $Y$ is stable in $Q$;
			\item $Q$ has at most $\abs X$ edges between $X$ and $Y$; and
			\item $\abs Z<\abs Q/(2\Delta)$.
		\end{itemize}
		
		To this end, let $R_0:=V(G)$, and let $i\in\{0,1,\ldots,C\}$ be minimal such that $\abs{R_i}<\abs Q/(2\Delta)$; this clearly holds for $i=C$ as $R_C=\emptyset$.
		Then $i\ge1$ since $\abs Q\le\abs G=\abs{R_0}$, and the minimality of $i$ yields
		\[\abs Q \le 2\Delta\abs{R_{i-1}} = 2\Delta(n_i+n_{i+1}+\cdots+n_C) \le 4\Delta n_i.\]
		
		Now, as discussed, let $X:=V(Q)\cap L_i$, $Y:=V(Q)\cap V_i$, and $Z:=V(Q)\cap R_i$; then $(X,Y,Z)$ is a partition of $V(Q)$ and $\abs Z<\abs Q/(2\Delta)$.
		By the construction of $G$, every vertex in $X$ sends at most one edge to $Y$; and so $Q$ has at most $\abs X$ edges between $X$ and $Y$.
		Also, $Y\subset V_i$ is stable.
		Now, since $\abs X\le\abs Q\le4\Delta n_i$ and $E$ occurs, $Q[X]$ has at most $3\abs X/2$ edges.
		This proves \cref{lem:bounded}.
	\end{proof}
	
	We are now ready to prove \cref{thm:orient1} by suitably choosing $n_1,\ldots,n_C$ to satisfy the double-exponential decay condition \eqref{eq:dexp}.
	This choice, up to specific constants, is common to all previous works using the discussed multipartite random graph model or its variations~\cite{CJMM26,DOW20,JSS26,PRS95,Ste25}, and is also used in \cite[Section~3]{KK26a} and \cite[Subsection~5.3]{KK26b}.
	
	\begin{proof}[Proof of \cref{thm:orient1}]
		Let $C:=16q^2$, let $D:=\ceil{(288\Delta C)^{1/2}}$, and let $n_i:=D^{3^{C-1}-3^{i-1}}$ for every $i\in[C]$.
		Then, for all $i\in\{2,\ldots,C\}$, we have
		\[\begin{aligned}
			n_{i-1} &= D^{3^{C-1}-3^{i-2}} = D^{2\cdot3^{i-2}}n_i\ge D^2n_i \ge 2n_i,\quad\text{and}\\
			n_{i-1}^3 &= D^{3^C-3^{i-1}} = D^2\cdot D^{2\cdot 3^{C-1}-2}\cdot D^{3^{C-1}-3^{i-1}} = D^2n_1^2n_i \ge (288\Delta C)\cdot n_1^2n_i.
		\end{aligned}\]
		Thus, by \cref{lem:stable,lem:bounded}, with positive probability, the random graph $G$ satisfies $\chis(G)>q$, and all subgraphs of $G$ with maximum degree at most $\Delta$ have average degree less than $6$.
		By construction, since $C=16q^2$, the graph $G$ is $(16q^2-1)$-degenerate.
		This proves \cref{thm:orient1}.
	\end{proof}
	
	\section{\texorpdfstring{$2$-Degeneracy}{2-Degeneracy}}
	\label{sec:2degen}
	
	This section provides a proof of \cref{thm:orient}, by detailing necessary changes to \cref{sec:random} and to the construction in \cite[Section~3]{KK26a} and \cite[Sections 4 and~5]{KK26b}.
	In what follows, for a graph $G$ with vertex set $V(G)$ and edge set $E(G)$, let $\edge(G):=\abs{E(G)}$.
	For every $S\subset V(G)$, let $\edge_G(S):=\edge(G[S])$; and for all disjoint $A,B\subset V(G)$ let $\edge_G(A,B)$ be the number of edges of $G$ between $A$ and $B$.
	For every $v\in V(G)$, let $d_G(v)$ be the degree of $v$ in $G$.
	
	\subsection{Adding a stability condition}
	
	Given the discussion in \cref{sec:random}, to prove \cref{thm:orient} it would be natural to modify \cref{lem:absorb} so that its conclusion becomes `$Q$ has minimum degree at most $2$'.
	As a first step, it is not hard to see that the assumption \eqref{eq:dexp} on the sequence $(n_i)_{i=1}^C$ can be adjusted so that the density $3/2$ in \cref{lem:sparse} (and so in the first property of \cref{lem:absorb}) is improved to $1+\eps$ for any prescribed $\eps>0$ (see \cref{lem:verysparse} below).
	A suitable change in the parameter $K$ in \eqref{eq:dexp} also allows one to relax the last property of \cref{lem:absorb} to `$\abs Z<\eps\abs Q$'.
	These two observations were actually optimised in \cite{KK26a} and~\cite{KK26b} to prove \cref{thm:orient1} with average degree less than $4$ (so minimum degree at most $3$), and in general cannot yield minimum degree at most $2$.
	To explain the latter, $Q$ could have minimum degree $3$ and average degree less than $4$ in the following scenario:
	\begin{itemize}
		\item $Q[X]$ is a cycle;
		\item every vertex in $X$ has exactly one neighbour in $Y$;
		\item $Y$ is negligible compared to $X$, and $Z$ is empty; and
		\item all vertices in $Y$ have degree more than $3$ in $Q$.
	\end{itemize}
	
	Therefore, to ensure that $Q$ has minimum degree at most $2$, it is necessary to impose another structural hypothesis on \cref{lem:absorb}.
	The following lemma incorporates one such hypothesis, by requiring the vertices in $X$ with a neighbour in $Y$ to be pairwise nonadjacent.
	
	\begin{lemma}
		\label{lem:tri}
		Let $\Delta\ge1$.
		Let $Q$ be a graph with $\abs Q\ge1$ and maximum degree at most $\Delta$.
		Assume that $V(Q)$ admits a partition $(X,Y,Z)$ allowing empty parts, such that:
		\begin{itemize}
			\item $\edge_Q(X)\le11\abs X/10$;
			\item $Y$ is stable;
			\item every vertex in $X$ has at most one neighbour in $Y$;
			\item the set of vertices in $X$ with a neighbour in $Y$ is stable; and
			\item $\abs Z<\abs Q/(10\Delta)$.
		\end{itemize}
		Then $Q$ has minimum degree at most $2$.
	\end{lemma}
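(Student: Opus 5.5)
The plan is to argue by contradiction: assume $Q$ has minimum degree at least $3$, and turn the degree condition into linear inequalities that force $\abs Q\le c\Delta\abs Z$ with $c<10$, contradicting $\abs Z<\abs Q/(10\Delta)$. Let $A\subset X$ be the set of vertices of $X$ with a neighbour in $Y$, let $a:=\abs A$, and let $b:=\abs{X\setminus A}$. Every vertex of $A$ has exactly one neighbour in $Y$, so $\edge_Q(X,Y)=a$. The only interaction with $Z$ goes through edges incident to $Z$, and there are at most $\Delta\abs Z$ of these. Write $z:=\Delta\abs Z$.

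Next I will collect the degree inequalities, using that $Y$ and $A$ are stable.
\begin{itemize}
\item Each $v\in Y$ has all its neighbours in $A\cup Z$, so $3\abs Y\le a+\edge_Q(Y,Z)\le a+z$.
\item Each $v\in A$ has one neighbour in $Y$ and none in $A$, so it has at least $2$ neighbours in $(X\setminus A)\cup Z$.
\item Each $v\in X\setminus A$ has at least $3$ neighbours in $X\cup Z$.
\end{itemize}
Summing the second inequality gives $\edge_Q(A,X\setminus A)\ge 2a-\edge_Q(A,Z)$. Summing the third gives $\edge_Q(A,X\setminus A)+2\edge_Q(X\setminus A)\ge 3b-\edge_Q(X\setminus A,Z)$.

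These produce two lower bounds on $\edge_Q(X)=\edge_Q(A,X\setminus A)+\edge_Q(X\setminus A)$. Crudely, $\edge_Q(X)\ge\edge_Q(A,X\setminus A)\ge 2a-z$. Taking half of each summed inequality gives $\edge_Q(X)\ge a+\tfrac32b-\tfrac z2$. Comparing each with $\edge_Q(X)\le\tfrac{11}{10}(a+b)$ gives
\[9a\le 11b+10z\qquad\text{and}\qquad 4b\le a+5z.\]
Substituting the second into the first yields $\tfrac{25}4a\le\tfrac{95}4z$, so $a\le 3.8z$. It then follows that $b\le 2.2z$ and $\abs Y\le(a+z)/3\le1.6z$.

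Adding up, $\abs Q=a+b+\abs Y+\abs Z\le 7.6\Delta\abs Z+\abs Z\le 8.6\Delta\abs Z$. This contradicts $\abs Q>10\Delta\abs Z$, both when $Z\ne\emptyset$ and when $Z=\emptyset$ (then the bound forces $\abs Q=0$, contrary to $\abs Q\ge1$).

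The main obstacle is choosing the right combination of inequalities. The crude bound $\edge_Q(X)\ge 2a-z$ alone cannot control $b$, and the averaged bound alone cannot control $a$. Stability of $A$ is exactly what lets vertices of $A$ contribute their two remaining edges entirely to $\edge_Q(A,X\setminus A)$, and so to $\edge_Q(X)$. The remaining work is routine arithmetic, checking that the constants $11/10$ and $10\Delta$ leave enough slack.
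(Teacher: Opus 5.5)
Your proof is correct and is essentially the paper's argument. The paper uses the same three degree-sum inequalities: over $Y$, over the stable set $W=A$, and over all of $X$. Your ``averaged'' bound is exactly $3\abs X\le 2\edge_Q(X)+a+\Delta\abs Z$. The only difference is that the paper combines them to bound $\abs X=a+b$ directly, getting $\abs X\le 6\Delta\abs Z$ and $3\abs Q\le 28\Delta\abs Z$, whereas you solve for $a$ and $b$ separately and get the slightly sharper $\abs Q\le 8.6\Delta\abs Z$.
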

	
	\begin{proof}
		Suppose that $Q$ has minimum degree at least $3$.
		Our aim is to reach a contradiction by showing that each of $\abs X,\abs Y,\abs Z$ is small compared to $\abs Q$.
		We already know $\abs Z<\abs Q/(10\Delta)$ by the assumption.
		For $\abs Y$, since $Y$ is stable and $Q$ has maximum degree at most $\Delta$, taking the sum of degrees of vertices in $Y$ gives
		\begin{equation}
			\label{eq:y}
			3\abs Y \le \sum_{v\in Y}d_Q(v) \le \edge_Q(X,Y)+\edge_Q(Y,Z) \le \abs X+\Delta\abs Z,
		\end{equation}
		since every vertex in $X$ sends at most one edge to $Y$.
		It remains to bound $\abs X$ via the following claim.
		
		\begin{claim}
			$\abs X\le6\Delta\abs Z$.
		\end{claim}
		
		\begin{subproof}
			The key quantity to look at is $b:=\edge_Q(X)\le11\abs X/10$.
			Let $W$ be the set of vertices in $X$ with a neighbour in $Y$; then $W$ is stable.
			Thus, taking the sum of degrees of vertices in $W$ gives
			\[3\abs W \le \sum_{v\in W}d_Q(v) = \edge_Q(W,X\setminus W)+\edge_Q(W,Y)+\edge_Q(W,Z) \le b+\abs W+\Delta\abs Z,\]
			which implies $2\abs W\le b+\Delta\abs Z$.
			Now, the definition of $W$ gives $\edge_Q(X,Y)=\abs W$.
			Taking the sum of degrees of vertices in $X$ then gives
			\[\begin{aligned}
				3\abs X \le \smash[b]{\sum_{v\in X}d_Q(v)} &= 2\edge_Q(X)+\edge_Q(X,Y)+\edge_Q(X,Z)\\
				&\le 2b+\abs W+\Delta\abs Z
				\le 5b/2+3\Delta\abs Z/2
				\le 11\abs X/4+3\Delta\abs Z/2,
			\end{aligned}\]
			and so $\abs X\le6\Delta\abs Z$.
			This proves the claim.
		\end{subproof}
		
		Now, \eqref{eq:y} and the claim above together imply that
		\[3\abs Q = 3\abs X+3\abs Y+3\abs Z \le 4\abs X+(\Delta+3)\abs Z \le (25\Delta+3)\abs Z \le 28\Delta\abs Z,\]
		which violates the assumption $\abs Z<\abs Q/(10\Delta)$.
		This proves \cref{lem:tri}.
	\end{proof}
	
	\subsection{Modifying the random multipartite construction}
	\label{subsec:random}
	
	We now modify the random multipartite construction in \cref{sec:random} so that the fourth hypothesis of \cref{lem:tri} holds, as follows.
	Let $C\ge2$ be an integer, and let $(n_i)_{i=1}^C$ be a decreasing sequence satisfying \eqref{eq:exp}.
	Let $\Gamma$ be a graph with vertex set $[C]$ (to be chosen later), and let $G$ be a random multipartite graph with parts $V_1,\ldots,V_C$, such that:
	\begin{itemize}
		\item $\abs{V_i}=n_i$ for all $i\in[C]$; and
		\item independently for each adjacent pair $i,j$ in $\Gamma$ with $i<j$, and for each $u\in V_i$, choose a uniformly random vertex $v\in V_j$, then make $uv$ an edge of $G$.
	\end{itemize}
	
	Then the random graph in \cref{sec:random} is the special case of this construction when $\Gamma$ is the complete $C$-vertex graph.
	In the case of this subsection, making the `template' graph $\Gamma$ triangle-free would be a natural way to ensure the fourth hypothesis of \cref{lem:tri}.
	Thus, we will choose $\Gamma$ from the following lemma, whose proof is a routine deletion argument and can be found in \cref{sec:triangle}.
	
	\begin{lemma}
		\label{lem:triangle}
		For every $r\ge4$, there exists a triangle-free graph $\Gamma$ with maximum degree at most $144r\ln r$ and with no stable set of size at least $\abs\Gamma/r$.
	\end{lemma}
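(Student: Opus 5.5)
We will prove \cref{lem:triangle} by the standard alteration method: take a binomial random graph, delete a small set of vertices to kill all triangles and all vertices of excessive degree, and check that no large stable set survives. Let $n$ be large and $p:=c\ln r/n$ for a suitable constant $c$, say $c=36$. Consider $G(2n,p)$; we aim to delete at most $n$ vertices and keep at least $n$ vertices, so that the remaining graph $\Gamma$ has $\abs\Gamma\ge n$ and no stable set of size $n/r\le\abs\Gamma/r$. Since stable sets only shrink under vertex deletion, it is enough to show that $G(2n,p)$ itself has no stable set of size $m:=\ceil{n/r}$, with probability close to $1$.

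The first step is the stable-set bound. The expected number of stable $m$-sets is at most
\[\binom{2n}m(1-p)^{\binom m2}\le\left(\frac{2en}{m}e^{-p(m-1)/2}\right)^m\le\bigl(2er\cdot r^{-c/(2r)\cdot n/n\cdot(1-o(1))\cdot r}\bigr)^m,\]
and more carefully $p(m-1)/2\approx c\ln r/(2r)$, which is too weak as stated. So the choice must be $p:=c\,r\ln r/n$. Then $p m/2\approx (c/2)\ln r$, and the bracket is about $2er\cdot r^{-c/2}\to0$ rapidly for $c\ge4$, giving an expected count $o(1)$ as $n\to\infty$. The expected degree is then $2pn\approx 2cr\ln r$, which matches the target $144r\ln r$ up to the constant.

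The second step handles triangles. The expected number of triangles is at most $(2n)^3p^3/6=O((r\ln r)^3)$, a constant independent of $n$, so by Markov's inequality with probability at least $3/4$ there are at most $O((r\ln r)^3)\le n/2$ triangles; delete one vertex from each. The third step handles high degrees. Each degree is $\mathrm{Bin}(2n-1,p)$ with mean at most $2cr\ln r$. By a Chernoff bound, the probability that a vertex has degree above $144r\ln r$ (say with $c=24$, so that the threshold is three times the mean) is at most $e^{-\Omega(r\ln r)}\le 1/8$. Hence the expected number of high-degree vertices is at most $n/4$, and by Markov's inequality this number is at most $n/2$ with probability at least $1/2$; delete them all. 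Deleting vertices only lowers degrees and destroys triangles, so the remaining graph is triangle-free with maximum degree at most $144r\ln r$ and has at least $n$ vertices.

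By the union bound, all three good events occur simultaneously for large $n$. The main obstacle is bookkeeping the constants: $c$ must be large enough for $2er\cdot r^{-c/2}<1/2$ and small enough that $144r\ln r$ exceeds the mean degree by a constant factor, so that Chernoff gives a probability below $1/8$ for all $r\ge4$. The choice $c=24$ leaves ample room in both. If the small values of $r$ cause trouble in the Chernoff step, one can instead use the bound $\Pr[\deg\ge k]\le\binom{2n}k p^k\le(2epn/k)^k$ with $k=144r\ln r$, which gives $(2e\cdot24/144)^{k}=(e/3)^k$, and this is tiny.
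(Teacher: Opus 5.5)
Your proof is correct and is essentially the paper's deletion argument: an Erd\H os--R\'enyi graph with edge probability of order $r\ln r/N$, a first-moment bound ruling out stable sets of size $\lceil N/(2r)\rceil$, Markov's inequality for triangles, and deletion of a triangle-hitting set and of the high-degree vertices while keeping at least half the vertices. The only real difference is how you bound the high-degree vertices: you use a per-vertex binomial tail, whereas the paper applies Markov's inequality to the total edge count (fewer than $Nd$ edges forces fewer than $N/3$ vertices of degree at least $6d$), which avoids any concentration bound; in a write-up you should also drop the false start with $p=c\ln r/n$ and fix $c=24$ throughout.
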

	
	From now on, we fix $r=(4q)^2$, let $\Gamma$ be given by \cref{lem:triangle}, let $C:=\abs{\Gamma}$, and identify $V(\Gamma)$ with $[C]$.
	Our first task is to verify the following analogue of \cref{lem:stable}, by an almost identical argument in which the definition of good subsets is refined slightly.
	
	\begin{lemma}
		\label{lem:stable1}
		Assume \eqref{eq:exp}.
		Then $\chis(G)>q$ with probability greater than $1/2$.
	\end{lemma}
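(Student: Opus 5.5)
We follow the proof of \cref{lem:stable} almost verbatim, with the same weights $w(v):=1/n_i$ for $v\in V_i$, so that $w(V(G))=C$. As before, it suffices to show that with probability greater than $1/2$ there is no stable set $S$ with $w(S)\ge C/q$. The new difficulty is that two parts $V_i,V_j$ are only joined by random edges when $ij\in E(\Gamma)$. So the index set $I$ of a heavy stable set must be turned into a nonstable set in $\Gamma$, and we need to find an index $i$ with many $\Gamma$-neighbours in $I$. This is where the choice $r=(4q)^2$ and the property that $\Gamma$ has no stable set of size at least $C/r$ come in.

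\emph{Good sets.} Call $S$ good if there is $I\subset[C]$ with $w(S\cap V_j)=0$ for $j\notin I$, $w(S\cap V_j)\ge1/(2q)$ for $j\in I$, and $w(S)\ge C/(2q)$. Exactly as before, any $S$ with $w(S)\ge C/q$ contains a good subset, since discarding the light parts loses weight at most $C/(2q)$. Then $\abs I\ge w(S)\ge C/(2q)$.

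\emph{Finding a heavy neighbourhood.} The plan is to pass from $I$ to a set of indices each of which sees substantial weight among its later neighbours. Let $I'\subset I$ be the set of $i\in I$ with $\sum_{j\in I,\,j>i,\,ij\in E(\Gamma)}w_j\ge1$, where $w_j:=w(S\cap V_j)$. We claim $I\setminus I'$ cannot be too large. Inside $\Gamma[I\setminus I']$, every vertex has later-neighbour weight below $1$, and every $w_j\ge1/(2q)$, so each vertex has fewer than $2q$ later neighbours. Thus $\Gamma[I\setminus I']$ is $(2q-1)$-degenerate and hence $2q$-colourable. It therefore contains a stable set of size at least $\abs{I\setminus I'}/(2q)$, which forces $\abs{I\setminus I'}<2qC/r=C/(8q)$. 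Since $\abs I\ge C/(2q)$, the set $I'$ is nonempty; we simply need one index $i\in I'$. We refine the definition of $B_i$ so that it consists of the good $S$ for which $i$ is the least index of $I'$ (or, more simply, any index in $I'$, with a union bound over $i$). Then $S\subset V_i\cup\cdots\cup V_C$ is not guaranteed, because earlier indices of $I$ may lie outside $I'$.

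\emph{Main obstacle: counting.} The step I expect to be delicate is bounding the number of sets to union over. The fix is to union-bound only over the pair $(S\cap V_i,\;S\cap(V_{i+1}\cup\cdots\cup V_C))$, since only the edges from $S\cap V_i$ to later neighbouring parts matter. By \eqref{eq:exp}, there are at most $2^{2n_i}$ such pairs. For $i\in I'$, stability forbids every edge from $S\cap V_i$ into $\bigcup_{j>i,\,ij\in E(\Gamma),\,j\in I}(S\cap V_j)$. This has probability at most
\[\exp\Bigl(-\abs{S\cap V_i}\sum_{j}w_j\Bigr)\le e^{-w_in_i\cdot s},\]
where $s$ is the later-neighbour weight. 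With only $s\ge1$ and $w_i\ge1/(2q)$ this gives $e^{-n_i/(2q)}$, which is too weak against $2^{2n_i}$. So we must strengthen the threshold, defining $I'$ by $s\ge 4q$ (so that $w_is\ge2$ and the bound is $e^{-2n_i}<2^{-2n_i}\cdot2^{-n_i}$). Redoing the degeneracy bound with this threshold makes $\Gamma[I\setminus I']$ $(8q^2-1)$-degenerate, so its stable sets give $\abs{I\setminus I'}<8q^2C/r=C/(2q)$. This is just barely enough when combined with $\abs I\ge C/(2q)$, so we will take the good-set threshold to be $w(S)\ge 3C/(4q)$ and the light-part cutoff to be $1/(4q)$, adjusting the constants accordingly (this is what $r=(4q)^2$ is tuned for). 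Finally, summing $2^{2n_i}e^{-2n_i}$-type terms over $i\in[C]$ gives at most $\sum_{n\ge1}2^{-n}$ scaled below $1/2$, which yields the lemma.
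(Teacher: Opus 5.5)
There is a genuine gap in showing that some $i\in I$ has later-neighbour weight $s_i:=\sum_{j\in I,\,j>i,\,ij\in E(\Gamma)}w_j$ large enough for the probability estimate. Your degeneracy argument gives $\abs{I\setminus I'}<\lceil\theta/\lambda\rceil\,C/r$, where $\lambda$ is the light-part cutoff and $\theta$ is the threshold defining $I'$. With $\lambda=1/(2q)$ and $\theta=4q$ this is $8q^2C/(16q^2)=C/2$, not $C/(2q)$. Since $\abs I$ can be as small as about $C/q$ (take $S$ to be a union of whole parts), nothing follows.

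Within the colouring framework, retuning constants cannot help. You need $\lambda<1/q$, since otherwise the light parts can absorb all of the weight $C/q$. You also need $\lambda\theta$ bounded below by a constant exceeding $2\ln 2$ for $\sum_i 2^{2n_i}e^{-\lambda\theta n_i}$ to be small. Hence $\theta/\lambda=\lambda\theta/\lambda^2=\Omega(q^2)$, and your bound on $\abs{I\setminus I'}$ is of order $C$, far above $\abs I=\Theta(C/q)$. The loss occurs when you convert the weight bound $s_i<\theta$ into a count of fewer than $\theta/\lambda$ neighbours. The colouring route would only work with $r$ of order $q^3$, which would change the degeneracy bound in \cref{thm:orient}.

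The missing idea is to average weight over a dominating stable set instead of colouring. Greedily from left to right, choose a stable set $J$ of $\Gamma[I]$ such that every $j\in I\setminus J$ has an earlier neighbour in $J$. For $i\in J$, let $P_i$ be $i$ together with its later neighbours in $\Gamma[I]$. These sets cover $I$, so $\sum_{i\in J}(w_i+s_i)\ge w(R)\ge C/(2q)$, while $\abs J<C/r=C/(16q^2)$. Hence some $i\in J$ has $w_i+s_i\ge 8q$, so $w_is_i\ge 8qw_i-w_i^2\ge 3$, and the probability is at most $e^{-3n_i}$.

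This is the paper's argument. There, $S':=\bigcup_{j\in P_i}(S\cap V_j)$ is taken as the good set; its least index is $i$, so it lies in $V_i\cup\cdots\cup V_C$ automatically. Your union bound over tails $S\cap(V_i\cup\cdots\cup V_C)$ is valid and would work equally well once this step is fixed, since both the event and the condition on $i$ depend only on the tail.

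Separately, your final arithmetic is off. The inequality $e^{-2n}<2^{-3n}$ is false, since $e^2<8$. With $\lambda\theta=2$, the sum $\sum_i(4/e^2)^{n_i}$ need not be below $1/2$ under \eqref{eq:exp} alone. The exponent $3$ is what makes $\sum_i 2^{2n_i}e^{-3n_i}\le\sum_{n\ge1}4^{-n}<1/2$.
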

	
	\begin{proof}
		For each $i\in[C]$, let $N_\Gamma[i]:=N_\Gamma(i)\cup\{i\}$ where $N_\Gamma(i)$ is the neighbourhood of $i$ in $\Gamma$; and for each $v\in V_i$, let $w(v):=1/n_i$.
		For each $S\subset V(G)$, let $w(S):=\sum_{v\in S}w(v)$.
		Thus $w(V_i)=1$ for all $i\in[C]$, and $w(V(G))=C$.
		Let $w_{\max}$ be the maximum $w(S)$ over all stable sets $S\subset V(G)$.
		If $w_{\max}<C/q$, then the weight function $w/w_{\max}$ witnesses $\chis(G)\ge C/w_{\max}>q$, by the definition of fractional chromatic number.
		Therefore, it suffices to show that $G$ has a stable set of weight at least $C/q$ with probability less than $1/2$.
		
		To see this, we say that a subset $S\subset V(G)$ is {\em good} if the following hold:
		\begin{itemize}
			\item $w(S)\ge8q$; and
			\item there exists $I\subset[C]$ with $w(S\cap V_j)=0$ for all $j\in[C]\setminus I$ and $w(S\cap V_j)\ge(2q)^{-1}$ for all $j\in I$,
			such that $I\subset N_\Gamma[i]$ where $i$ is the least element of $I$.
		\end{itemize}
		
		\begin{claim}
			Every $S\subset V(G)$ with $w(S)\ge C/q$ contains a good subset.
		\end{claim}
		
		\begin{subproof}
			Let $I:=\{j\in[C]:w(S\cap V_j)\ge(2q)^{-1}\}$; then $\sum_{j\in I}w(S\cap V_j)\ge w(S)-C\cdot(2q)^{-1}\ge C/(2q)$.
			Let $R:=\bigcup_{j\in I}(S\cap V_j)$; then $w(R)\ge C/(2q)$.
			By a greedy argument from left to right along $I$, we obtain a stable set $J$ in $\Gamma[I]$ such that every vertex in $I\setminus J$ has a left-hand neighbour in $J$.
			For each $i\in J$, let $P_i$ consist of $i$ and its right-hand neighbours in $\Gamma[I]$; then $I=\bigcup_{i\in J}P_i$.
			Thus, since $\abs J<\abs\Gamma/r=C(4q)^{-2}$, there exists $i\in J$ with $\sum_{j\in P_i}w(S\cap V_j)\ge w(R)/\abs J\ge8q$.
			Then $S'=\bigcup_{j\in P_i}(S\cap V_j)$ is a good subset of $S$.
			This proves the claim.
		\end{subproof}
		
		By the claim above, to prove the lemma, it remains to show that $G$ has a good stable set with probability less than $1/2$.
		To this end, for each $i\in[C]$ let $B_i$ be the set of good $S\subset V(G)$ for which $i$ is the least index satisfying $w(S\cap V_i)\ge1/(2q)$; then $S\subset V_i\cup V_{i+1}\cup\cdots\cup V_C$ for all $S\in B_i$.
		Now, fix $i\in[C]$, and note that $\abs{B_i}\le2^{n_i+n_{i+1}+\cdots+n_C}\le2^{2n_i}$.
		Let $S\in B_i$ with $I=\{j\in[C]:w(S\cap V_j)\ge1/(2q)\}$; then $i$ is the least element in $I$.
		Let $w_j:=w(S\cap V_j)=\abs{S\cap V_j}/n_j\le1$ for all $j\in I$; then $\abs I\ge w(S)\ge8q>1$.
		If $S$ is stable in $G$, then $G$ contains no edge between $S\cap V_i$ and $\bigcup_{j\in I\setminus\{i\}}(S\cap V_j)$; and the latter event occurs with the following probability (note that $w_i\cdot w(S)-w_i^2\ge(2q)^{-1}\cdot 8q-1=3$):
		\[\prod_{j\in I\setminus\{i\}}(1-w_j)^{\abs{S\cap V_i}}
		\le e^{-\abs{S\cap V_i}\sum_{j\in I\setminus\{i\}}w_j}
		= e^{-w_in_i\cdot(w(S)-w_i)}
		= e^{-(w_i\cdot w(S)-w_i^2)n_i}
		\le e^{-3n_i}.\]
		
		Since the above holds for all $i\in[C]$, the union bound implies that $G$ contains a good stable set with probability at most
		\[\sum_{i\in[C]}\abs{B_i}\cdot e^{-3n_i}
		\le \sum_{i\in[C]}2^{2n_i}\cdot e^{-3n_i}
		\le \sum_{i\in[C]}2^{-2n_i}
		< \sum_{n\ge1}2^{-2n} < 1/2,\]
		where the third inequality holds since we are assuming \eqref{eq:exp}.
		This proves \cref{lem:stable1}.
	\end{proof}
	
	To arrange for the first hypothesis of \cref{lem:tri},
	we will formulate a more general version of \cref{lem:sparse}.
	The constant factor $3/2$ there will be replaced by $1+\eps$, where $\eps\in(0,\frac12]$ is some parameter to be chosen later.
	In this setting, for $K\ge1$, we assume the following faster double-exponential decay than \eqref{eq:dexp}:
	\begin{equation}
		\label{eq:sdexp}
		n_{i-1}^{1+\eps} \ge (3^{2+\eps}K^\eps C)\cdot n_1n_i^\eps \quad\text{for all }i\in\{2,\ldots,C\}.
	\end{equation}
	
	We also recall the definitions
	\[L_i := \bigcup_{1\le j<i}V_j \quad\text{and}\quad
	R_i := \bigcup_{i<j\le C}V_j \quad\text{for all }i\in[C].\]
	
	Assuming \eqref{eq:sdexp}, the following lemma is an extension of \cref{lem:sparse} with the same proof.
	
	\begin{lemma}
		\label{lem:verysparse}
		Assume \eqref{eq:sdexp}.
		Then $G$ satisfies the following with probability greater than $1/2$: for all $i\in[C]$ and all $X\subset L_i$ with $\abs X\le Kn_i$, $G[X]$ has at most $(1+\eps)\abs X$ edges.
	\end{lemma}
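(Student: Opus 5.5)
We follow the proof of \cref{lem:sparse} essentially verbatim, replacing the edge threshold $3r/2$ by $(1+\eps)r$. For each $i\in\{2,\ldots,C\}$ (the case $i=1$ is vacuous since $L_1=\emptyset$) and each integer $r$ with $1\le r\le Kn_i$, let $E_{i,r}$ be the event that some $X\subset L_i$ with $\abs X=r$ spans more than $(1+\eps)r$ edges. Since a graph on $r$ vertices has at most $r(r-1)/2$ edges, $E_{i,r}$ is empty unless $r$ is large enough that $r(r-1)/2>(1+\eps)r$. In particular, only $r\ge4$ matters, as for $\eps\le\frac12$ we need $r-1>2+2\eps$.

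The probabilistic input is unchanged. Every pair of vertices in $L_i$ is an edge with probability at most $1/n_{i-1}$, because both endpoints lie in parts with index at most $i-1$ and the edge is chosen by a uniform choice in the larger-indexed part. Moreover, for any set $P$ of pairs, the probability that all pairs of $P$ are edges is at most $n_{i-1}^{-\abs P}$. This holds because the only dependencies are between pairs $uv,uv'$ with $v,v'$ in the same part, which are mutually exclusive. Restricting to adjacent pairs in $\Gamma$ only removes possible edges, so the same bound holds.

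Set $m:=\ceil{(1+\eps)r}$, or $\floor{(1+\eps)r}+1$ to match "more than". We take a union bound over $X$ and over $m$-sets $P$ of pairs inside $X$, using $\abs{L_i}\le2n_1$ and $\binom nk\le(3n/k)^k$:
\[
\Pr(E_{i,r})\le\left(\frac{6n_1}{r}\right)^{r}\left(\frac{3r^2/2}{m}\right)^{m}n_{i-1}^{-m}\le\left(\frac{6n_1}{r}\right)^{r}\left(\frac{3r}{2n_{i-1}}\right)^{(1+\eps)r}.
\]
The last step uses $m\ge(1+\eps)r$ and $3r/(2n_{i-1})\le1$, which holds since $r\le Kn_i\le n_{i-1}$ by \eqref{eq:sdexp}. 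Grouping the factors gives
\[
\left(\frac{6\cdot(3/2)^{1+\eps}\,n_1 r^{\eps}}{n_{i-1}^{1+\eps}}\right)^{r}
\le\left(\frac{3^{2+\eps}n_1(Kn_i)^{\eps}}{n_{i-1}^{1+\eps}}\right)^{r}
\le C^{-r},
\]
where the last inequality is exactly \eqref{eq:sdexp}. Here we check that $6(3/2)^{1+\eps}\le 3^{2+\eps}$, which reduces to $6\cdot 1.5^{1+\eps}\le 9\cdot 3^{\eps}$ and is clear.

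Summing over $2\le i\le C$ and $r\ge4$ bounds the failure probability by $C\sum_{r\ge4}C^{-r}\le 2C^{-3}<1/2$, using $C\ge2$. The only delicate point is the constant bookkeeping: the binomial estimate must produce a factor $r^{\eps}$ that is absorbed by $K^{\eps}n_i^{\eps}$, and the numerical constants must fit under $3^{2+\eps}$. We also need $Kn_i\le n_{i-1}$, which follows from \eqref{eq:sdexp}. If the constants turn out slightly off, we would fall back on the coarser estimate $\binom{r^2/2}{m}\le(er^2/(2m))^m$ and adjust accordingly. In any case the summation has ample slack.
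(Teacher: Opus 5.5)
Your proposal is correct and is essentially the paper's proof line for line: the same union bound over $X$ and over sets of $\ceil{(1+\eps)r}$ pairs, the same grouping into $\bigl(3^{2+\eps}n_1r^\eps/(2^\eps n_{i-1}^{1+\eps})\bigr)^r\le C^{-r}$ via \eqref{eq:sdexp}, and the same geometric sum. One small slip to fix: $r\le Kn_i\le n_{i-1}$ only gives $3r/(2n_{i-1})\le 3/2$, not $\le 1$. What you need is $Kn_i\le 2n_{i-1}/3$, as the paper states, and this does follow from \eqref{eq:sdexp}: combined with $n_1\ge n_{i-1}$, it gives $n_{i-1}\ge(3^{2+\eps}C)^{1/\eps}Kn_i$.
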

	
	\begin{proof}
		For each $i\in\{2,\ldots,C\}$ and each integer $r$ with $4\le r\le Kn_i$ (in particular $r\le Kn_i\le2n_{i-1}/3$), let $E_{i,r}$ be the event that there exists $X\subset L_i$ such that $\abs X=r$ and $G[X]$ has at least $(1+\eps)r$ edges.
		Every pair of vertices in $L_i$ becomes an edge of $G$ with probability at most $1/n_{i-1}$.
		For every set $P$ of pairs of vertices in $L_i$, the events that the pairs in $P$ become edges of $G$ are independent unless $P$ contains two pairs $uv$ and $uv'$ with $u\in V_j$ and $v,v'\in V_s$ where $1\le j<s<i$; such two pairs cannot be edges of $G$ simultaneously.
		Consequently, the probability that all pairs in $P$ become edges of $G$ is at most $n_{i-1}^{-\abs P}$.
		Since $\abs{L_i}=n_1+\cdots+n_{i-1}\le2n_1$ and by the inequality $\binom nk\le(\frac{3n}k)^k$ for all $n\ge k\ge1$, the union bound over all sets $X\subset L_i$ with $\abs X=r$, and all sets $P$ of $\ceil{(1+\eps)r}$ pairs of vertices in $X$, implies that $E_{i,r}$ occurs with probability at most
		\[\begin{aligned}
			\binom{\abs{L_i}}r\cdot\binom{r(r-1)/2}{\ceil{(1+\eps)r}}\cdot n_{i-1}^{-\ceil{(1+\eps)r}}
			&\le \left(\frac{6n_1}r\right)^r\cdot \left(\frac{3r}2\right)^{\ceil{(1+\eps)r}}\cdot n_{i-1}^{-\ceil{(1+\eps)r}}\\
			&\le \left(\frac{6n_1}r\right)^r\cdot \left(\frac{3r}{2n_{i-1}}\right)^{(1+\eps)r}
			= \left(\frac{3^{2+\eps}n_1r^\eps}{2^\eps n_{i-1}^{1+\eps}}\right)^r
			\le\left(\frac{3^{2+\eps}K^\eps n_1n_i^\eps}{n_{i-1}^{1+\eps}}\right)^r
			\le C^{-r}
		\end{aligned}\]
		where the last inequality holds since we are assuming \eqref{eq:sdexp}.
		Thus, since $L_1=\emptyset$, summing over $i\in[C]$ and $4\le r\le Kn_i$ and applying the union bound imply that the probability that there exist $i\in[C]$ and $X\subset L_i$ such that $\abs X\le Kn_i$ and $G[X]$ has more than $(1+\eps)\abs X$ edges is at most
		\[C\cdot\sum_{r\ge4}C^{-r} = \frac1{C^3-C^2} \le 1/4 < 1/2.\]
		This proves \cref{lem:verysparse}.
	\end{proof}
	
	\subsection{Putting everything together}

	We are now ready to prove \cref{thm:orient}, which we restate here for the reader's convenience.
	
	\thmorient*
	
	\begin{proof}
		Let $r:=(4q)^2$, let $\Gamma$ be the triangle-free graph given by \cref{lem:triangle}, and let $C:=\abs\Gamma$.
		Let $\eps:=1/10$, $K:=20\Delta$, and $D:=\ceil{3^{2+\eps}K^\eps C}$.
		We next choose suitable $n_1,\ldots,n_C$ so that \eqref{eq:sdexp} holds for $\eps$ and $K$, as follows:
		\[\begin{aligned}
			n_1 &:= D^{(1+\eps^{-1})^{C-1}}, &&\text{and}\\
			n_i &:= D^{1-(1+\eps^{-1})^{i-1}}n_1 &&\text{for all }i\in[C].
		\end{aligned}\]
		Then, for all $i\in\{2,\ldots,C\}$, we have
		\[\begin{aligned}
			(3^{2+\eps}K^\eps C)\cdot n_1n_i^\eps
			\le Dn_1n_i^\eps
			&= Dn_1\cdot D^{\eps-\eps(1+\eps^{-1})^{i-1}}n_1^\eps\\
			&= D^{1+\eps-\eps(1+\eps^{-1})^{i-1}}n_1^{1+\eps}
			= \left(D^{1-(1+\eps^{-1})^{i-2}}n_1\right)^{1+\eps}
			= n_{i-1}^{1+\eps},
		\end{aligned}\]
		which gives $n_{i-1}^\eps\ge K^\eps n_i^\eps\ge2^\eps n_i^\eps$ and so $n_{i-1}\ge2n_i$.
		Hence \eqref{eq:exp} and \eqref{eq:sdexp} are satisfied.
		
		Now, let $G$ be the random multipartite graph constructed in \cref{subsec:random}, given $\Gamma$ and $(n_i)_{i=1}^C$.
		By \cref{lem:stable1,lem:verysparse}, the random graph $G$ satisfies the following with positive probability:
		\begin{itemize}
			\item $\chis(G)>q$; and
			\item for all $i\in[C]$ and all $X\subset L_i$ with $\abs X\le Kn_i$, $G[X]$ has at most $(1+\eps)\abs X$ edges.
		\end{itemize}
		
		Choose such an instance of $G$.
		The triangle-free graph $\Gamma$ given by \cref{lem:triangle} has maximum degree at most $144r\ln r=144\cdot(4q)^2\cdot2\ln(4q)=4608q^2\ln(4q)$.
		Order all vertices of $V_i$ before all vertices of $V_j$ whenever $i<j$, in an arbitrary order within each part; and this ordering witnesses that $G$ is $\floor{4608q^2\ln(4q)}$-degenerate.
		This and the first property of $G$ above prove the first two conditions in \cref{thm:orient}.
		To prove the last condition, namely, that every subgraph of $G$ with maximum degree at most $\Delta$ is $2$-degenerate, it suffices to show that every such subgraph $Q$ with $\abs Q\ge1$ has minimum degree at most $2$, because $2$-degeneracy then follows by straightforward induction.
		
		Let $Q$ be a subgraph of $G$ with $\abs Q\ge1$ and maximum degree at most $\Delta$.
		By \cref{lem:tri}, it suffices to show that $V(Q)$ admits a partition $(X,Y,Z)$ allowing empty parts, such that:
		\begin{itemize}
			\item $\edge_Q(X)\le11\abs X/10$;
			\item $Y$ is stable;
			\item every vertex in $X$ has at most one neighbour in $Y$;
			\item the set of vertices in $X$ with exactly one neighbour in $Y$ is stable; and
			\item $\abs Z<\abs Q/(10\Delta)$.
		\end{itemize}
		
		To this end, let $R_0:=V(G)$, and let $i\in\{0,1,\ldots,C\}$ be minimal such that $\abs{R_i}<\abs Q/(10\Delta)$; this clearly holds for $i=C$ as $R_C=\emptyset$.
		Then $i\ge1$ since $\abs Q\le\abs G=\abs{R_0}$.
		Let $X:=V(Q)\cap L_i$, $Y:=V(Q)\cap V_i$, and $Z:=V(Q)\cap R_i$.
		Then $Y$ is stable, $\abs Z<\abs Q/(10\Delta)$, and every vertex in $X$ has at most one neighbour in $Y$ by the construction of $G$.
		Also, since $\Gamma$ is triangle-free and $Y\subset V_i$, the set of vertices in $X$ with exactly one neighbour in $Y$ is stable.
		Now, the minimality of $i$ and the choice of $K$ imply that
		\[\abs X\le\abs Q\le10\Delta\abs{R_{i-1}}\le10\Delta(n_i+n_{i+1}+\cdots+n_C)\le20\Delta n_i=Kn_i.\]
		Hence $\edge_Q(X)\le\edge_G(X)\le(1+\eps)\abs X=11\abs X/10$ by the choice of $\eps$ and the second property of $G$ above.
		This proves \cref{thm:orient}.
	\end{proof}
	
	\appendix
	\crefalias{section}{appendixsection}
	
	\section{Proof of \cref{lem:triangle}}
	\label{sec:triangle}
	
	We give a proof of \cref{lem:triangle} using the probabilistic method and a standard deletion argument.
	
	\begin{proof}[Proof of \cref{lem:triangle}]
		Let $d:=24r\ln r$, and let $N\ge6d^3$ be an integer.
		Consider the Erd\H os--R\'enyi random graph $G$ on $N$ vertices with edge probability $p=d/N$.
		The expected numbers of edges and triangles in $G$ are, respectively, $\binom N2p\le Nd/2$ and $\binom N3p^3\le d^3/6$.
		Thus, by Markov's inequality, $G$ contains at least $Nd$ edges or at least $d^3$ triangles with probability at most $1/2+1/6=2/3$.
		Now, for $s:=\ceil{N/(2r)}\ge N/(2r)\ge16$, we have $\binom s2\ge s^2/4$, and $ps/4-\ln(6r)\ge3\ln r-\ln(6r)=2\ln r-\ln 6\ge\ln(8/3)>1/2$.
		Thus, the probability that $G$ has a stable set of size $s$ is at most
		\[\binom Ns(1-p)^{\binom s2} \le \left(\frac{3N}s\right)^se^{-ps^2/4}\le (6r)^se^{-ps^2/4} = e^{-s(ps/4-\ln(6r))} < 1/3.\]
		Hence, with positive probability, $G$ has fewer than $Nd$ edges, fewer than $d^3$ triangles, and no stable set of size $s$.
		Choose such an instance of $G$.
		Then $G$ has fewer than $N/3$ vertices of degree at least $6d$, and contains a set of at most $d^3\le N/6$ vertices hitting all triangles in $G$.
		Removing all such vertices gives a triangle-free subgraph $\Gamma$ of $G$ with $\abs\Gamma\ge\abs G/2$, maximum degree at most $6d$, and no stable set of size $s$.
		Since $s-1<N/(2r)\le\abs\Gamma/r$, this proves \cref{lem:triangle}.
	\end{proof}
	
	\section*{Acknowledgement}
	
	We thank Raphael Steiner for sharing \cref{thm:chi-avoidable} with us and letting us include it in this note.
	
	\section*{Declaration of AI use}
	
	The initial purpose of this note was purely an exposition of the proof of \cref{thm:girthchi}, which the authors had learned of through its announcement by Conjectures.io~\cite{KK26b}.
	While trying to digest the construction method and link it to relevant existing ideas in the literature, the authors realised that \cite{KK26b} implicitly proves a variant of \cref{thm:orient1} with `$6$' replaced by `$4$'.
	The authors also observed that \cref{thm:orient1} itself suffices to improve \cref{thm:girthchi} from $6$-colourability to $4$-colourability via \cref{lem:extend}, just like a suitable analogue of \cref{thm:orient1} for $2$-degeneracy (or $3$-colourability) such as \cref{thm:orient} would suffice to prove \cref{thm:main}.
	That work was done entirely by the authors based on the manuscript~\cite{KK26b}, with no AI tools involved.
	After that, the authors prompted ChatGPT-6 Astra to prove a $2$-degeneracy (or $3$-colourability) version of \cref{thm:orient1} by modifying the probabilistic construction in~\cite{KK26b}.
	ChatGPT-6 Astra discovered a proof of \cref{thm:orient}, in particular proposing \cref{lem:tri} as a suitable extension of \cref{lem:absorb}.
	The authors streamlined and presented the proof in \cref{sec:2degen}.
	Then the authors discovered the manuscript~\cite{KK26a} and revised the text accordingly.
	ChatGPT-5.6 Sol was also used to proofread this note.
	
	\bibliographystyle{abbrv}
	\bibliography{girthchi}

\begin{thebibliography}{10}

\bibitem{ABHSZ26}
S.~Adenwalla, S.~Braunfeld, T.~Hons, J.~Sylvester, and V.~Zamaraev.
\newblock Set-defined graph classes: {$\chi$}-boundedness meets tropical
  algebra.
\newblock \href{https://arxiv.org/abs/2607.23754v1}{\tt arXiv:2607.23754v1},
  2026.

\bibitem{Apa22}
R.~Aparecido~Enju.
\newblock Uma conjectura de {E}rd{\H{o}}s e {H}ajnal.
\newblock Master's thesis, Instituto de Matem{\'{a}}tica, Estat{\'{\i}}stica e
  Ci{\^{e}}ncia da Computa{\c{c}}{\~{a}}o, Universidade de S{\~{a}}o Paulo,
  2022.
\newblock In Portuguese.

\bibitem{CJMM26}
D.~Chakraborti, O.~Janzer, A.~Methuku, and R.~Montgomery.
\newblock Regular subgraphs at every density.
\newblock {\em Trans. Amer. Math. Soc.}, 379(11):8069--8090, 2026.

\bibitem{CGL06}
M.~Cropper, A.~Gy{\'{a}}rf{\'{a}}s, and J.~Lehel.
\newblock Hall ratio of the {M}ycielski graphs.
\newblock {\em Discrete Math.}, 306(16):1988--1990, 2006.

\bibitem{DOW20}
Z.~Dvo{\v{r}}{\'{a}}k, P.~Ossona~de Mendez, and H.~Wu.
\newblock {$1$}-subdivisions, the fractional chromatic number and the {H}all
  ratio.
\newblock {\em Combinatorica}, 40(6):759--774, 2020.

\bibitem{Erd59}
P.~Erd{\H{o}}s.
\newblock Graph theory and probability.
\newblock {\em Canadian J. Math.}, 11:34--38, 1959.

\bibitem{Erd69}
P.~Erd{\H{o}}s.
\newblock Problems and results in chromatic graph theory.
\newblock In F.~Harary, editor, {\em Proof Techniques in Graph Theory}, pages
  27--35. Academic Press, New York, 1969.

\bibitem{Erd71}
P.~Erd{\H{o}}s.
\newblock Some unsolved problems in graph theory and combinatorial analysis.
\newblock In D.~J.~A. Welsh, editor, {\em Combinatorial Mathematics and its
  Applications}, pages 97--109. Academic Press, London, 1971.

\bibitem{Erd75}
P.~Erd{\H{o}}s.
\newblock Problems and results on finite and infinite graphs.
\newblock In A.~Hajnal, R.~Rado, and V.~T.~S{\'{o}}s, editors, {\em Infinite
  and finite sets}, volume~10 of {\em Colloquia Mathematica Societatis
  J{\'{a}}nos Bolyai}, pages 403--424. North-Holland, Amsterdam, 1975.

\bibitem{Erd76a}
P.~Erd{\H{o}}s.
\newblock Problems and results in combinatorial analysis.
\newblock In {\em Colloquio Internazionale sulle Teorie Combinatorie (Roma,
  1973), Tomo II}, volume~17 of {\em Atti dei Convegni Lincei}, pages 1--10.
  Accademia Nazionale dei Lincei, Rome, 1976.

\bibitem{Erd76b}
P.~Erd{\H{o}}s.
\newblock Problems and results in graph theory and combinatorial analysis.
\newblock In C.~S. J.~A. Nash-Williams and J.~Sheehan, editors, {\em
  Proceedings of the 5th British Combinatorial Conference}, volume~XV of {\em
  Congressus Numerantium}, pages 169--192. Utilitas Mathematica, Winnipeg,
  1976.

\bibitem{Erd78}
P.~Erd{\H{o}}s.
\newblock Problems and results in graph theory and combinatorial analysis.
\newblock In {\em Probl{\`{e}}mes combinatoires et th{\'{e}}orie des graphes},
  volume 260 of {\em Colloques Internationaux du CNRS}, pages 127--129.
  {\'{E}}ditions du CNRS, Paris, 1978.

\bibitem{Erd79a}
P.~Erd{\H{o}}s.
\newblock Problems and results in graph theory and combinatorial analysis.
\newblock In J.~A. Bondy and U.~S.~R. Murty, editors, {\em Graph theory and
  related topics}, pages 153--163. Academic Press, New York, 1979.

\bibitem{Erd79b}
P.~Erd{\H{o}}s.
\newblock Some old and new problems in various branches of combinatorics.
\newblock In F.~Hoffman, D.~McCarthy, R.~C. Mullin, and R.~G. Stanton, editors,
  {\em Proceedings of the 10th Southeastern International Conference on
  Combinatorics, Graph Theory, and Computing}, volume XXIII of {\em Congressus
  Numerantium}, pages 19--37. Utilitas Mathematica, Winnipeg, 1979.

\bibitem{Erd81a}
P.~Erd{\H{o}}s.
\newblock On the combinatorial problems which {I} would most like to see
  solved.
\newblock {\em Combinatorica}, 1(1):25--42, 1981.

\bibitem{Erd81b}
P.~Erd{\H{o}}s.
\newblock Problems and results on finite and infinite combinatorial analysis
  {II}.
\newblock {\em Enseign. Math. (2)}, 27(1--2):163--176, 1981.

\bibitem{Erd82}
P.~Erd{\H{o}}s.
\newblock Problems and results on finite and infinite combinatorial analysis
  {II}.
\newblock In {\em Logic and Algorithmic}, volume~30 of {\em Monographies de
  l'Enseignement Math{\'{e}}matique}, pages 131--144. Universit{\'{e}} de
  Gen{\`{e}}ve, Geneva, 1982.

\bibitem{Erd84}
P.~Erd{\H{o}}s.
\newblock Some new and old problems on chromatic graphs.
\newblock In K.~S. Vijayan and N.~M. Singhi, editors, {\em Combinatorics and
  Applications}, pages 118--126. Indian Statistical Institute, Calcutta, 1984.

\bibitem{Erd85}
P.~Erd{\H{o}}s.
\newblock Problems and results on chromatic numbers in finite and infinite
  graphs.
\newblock In Y.~Alavi, G.~Chartrand, D.~R. Lick, C.~E. Wall, and L.~Lesniak,
  editors, {\em Graph theory with applications to algorithms and computer
  science}, pages 201--213. John Wiley \& Sons, New York, 1985.

\bibitem{Erd90}
P.~Erd{\H{o}}s.
\newblock Some of my favourite unsolved problems.
\newblock In A.~Baker, B.~Bollob{\'{a}}s, and A.~Hajnal, editors, {\em A
  Tribute to Paul Erd{\H{o}}s}, pages 467--478. Cambridge University Press,
  Cambridge, 1990.

\bibitem{Erd95}
P.~Erd{\H{o}}s.
\newblock On some problems in combinatorial set theory.
\newblock {\em Publ. Inst. Math. (Beograd) (N.S.)}, 57(71):61--65, 1995.

\bibitem{EH66}
P.~Erd{\H{o}}s and A.~Hajnal.
\newblock On chromatic number of graphs and set-systems.
\newblock {\em Acta Math. Acad. Sci. Hungar.}, 17(1--2):61--99, 1966.

\bibitem{EH85}
P.~Erd{\H{o}}s and A.~Hajnal.
\newblock Chromatic number of finite and infinite graphs and hypergraphs.
\newblock {\em Discrete Math.}, 53:281--285, 1985.

\bibitem{HN60}
F.~Harary and R.~Z. Norman.
\newblock Some properties of line digraphs.
\newblock {\em Rend. Circ. Mat. Palermo (2)}, 9(2):161--168, 1960.

\bibitem{HE72}
C.~C. Harner and R.~C. Entringer.
\newblock Arc colorings of digraphs.
\newblock {\em J. Combin. Theory Ser. B}, 13(3):219--225, 1972.

\bibitem{JSS26}
B.~Janzer, R.~Steiner, and B.~Sudakov.
\newblock Chromatic number and regular subgraphs.
\newblock {\em Bull. Lond. Math. Soc.}, 58(4):Article e70262, 2026.

\bibitem{KK26a}
J.~Kohlmeyer and L.~Kruer.
\newblock A counterexample to {E}rd{\H{o}}s problem 108 via arc graphs.
\newblock \url{https://jenwin.io/papers/erdos108-arc-graphs.pdf}, 2026.
\newblock Retrieved 27th September 2026.

\bibitem{KK26b}
L.~Kruer and J.~Kohlmeyer.
\newblock Erd{\H{o}}s problem 108: {H}igh chromatic number with six-colour
  {$C_4$}-free subgraphs.
\newblock \url{https://conjectures.io/papers/erdos108.pdf}, 2026.
\newblock Retrieved 17th September 2026.

\bibitem{MW23}
B.~Mohar and H.~Wu.
\newblock Subgraphs of {K}neser graphs with large girth and large chromatic
  number.
\newblock {\em Art Discrete Appl. Math.}, 6(2):Article 2.11, 2023.

\bibitem{PTW26}
S.~Pettie, G.~Tardos, and B.~Walczak.
\newblock On a clique game and the {E}rd{\H{o}}s--{H}ajnal problem on
  high-chromatic high-girth subgraphs.
\newblock In K.~G. Larsen and B.~Saha, editors, {\em Proceedings of the 2026
  {A}nnual {ACM}-{SIAM} {S}ymposium on {D}iscrete {A}lgorithms ({SODA})}, pages
  2903--2927. SIAM, Philadelphia, 2026.

\bibitem{PRS95}
L.~Pyber, V.~R{\"{o}}dl, and E.~Szemer{\'{e}}di.
\newblock Dense graphs without {$3$}-regular subgraphs.
\newblock {\em J. Combin. Theory Ser. B}, 63(1):41--54, 1995.

\bibitem{Rod77}
V.~R{\"{o}}dl.
\newblock On the chromatic number of subgraphs of a given graph.
\newblock {\em Proc. Amer. Math. Soc.}, 64(2):370--371, 1977.

\bibitem{Sad25}
A.~Sadhukhan.
\newblock Shift graphs, chromatic number and acyclic one-path orientations.
\newblock {\em Discrete Math.}, 348(5):Article 114414, 2025.

\bibitem{Ste25}
R.~Steiner.
\newblock Fractional chromatic number vs. {H}all ratio.
\newblock {\em Combinatorica}, 45(4):Article 37, 2025.

\bibitem{Ste26}
R.~Steiner.
\newblock Locally bipartite subgraphs via multicolor {R}amsey numbers.
\newblock \href{https://arxiv.org/abs/2608.02522v2}{\tt arXiv:2608.02522v2},
  2026.

\bibitem{Ste-personal}
R.~Steiner.
\newblock Personal communication on 1st October 2026.

\bibitem{Tar18}
G.~Tardos.
\newblock On a graph coloring conjecture of {E}rd{\H{o}}s and {H}ajnal.
\newblock Talk at the ICM 2018 satellite meeting `Combinatorics: Extremal,
  Probabilistic and Additive'. Abstract at
  \url{https://epa-combinatorics2018.ime.usp.br/assets/programme.pdf}, 2018.
\newblock Retrieved 22nd September 2026.

\end{thebibliography}
	
\end{document}